\theoremstyle{plain}
\newtheorem{thm}{\protect\theoremname}
  \theoremstyle{plain}
  \newtheorem{cor}[thm]{\protect\corollaryname}
  \theoremstyle{plain}
  \newtheorem{lem}[thm]{\protect\lemmaname}
  \theoremstyle{plain}
  \newtheorem{prop}[thm]{\protect\propositionname}
  \theoremstyle{plain}
  \newtheorem{conjecture}[thm]{\protect\conjecturename}
  \theoremstyle{definition}
  \newtheorem{defn}[thm]{\protect\definitionname}
  \theoremstyle{definition}
  \newtheorem{example}[thm]{\protect\examplename}
  \theoremstyle{remark}
  \newtheorem{rem}[thm]{\protect\remarkname}
  \theoremstyle{remark}
  \newtheorem{claim}[thm]{\protect\claimname}
\date{}
\theoremstyle:=definition,remark,plain\do{%
        \expandafter\g@addto@macro\csname th@\theoremstyle\endcsname{%
            \addtolength\thm@preskip\parskip
            }%
        }
\theoremstyle{plain}
\newtheorem{mythm}{\protect\theoremname}
\renewenvironment{thm}{\begin{mythm}}{\end{mythm}}
\crefname{mythm}{Theorem}{Theorems}
\theoremstyle{definition}
\newtheorem{mydefn}[mythm]{\protect\definitionname}
\renewenvironment{defn}{\begin{mydefn}}{\end{mydefn}}
\theoremstyle{definition}
\newtheorem{myexample}[mythm]{\protect\examplename}
\renewenvironment{example}{\begin{myexample}}{\end{myexample}}
\theoremstyle{plain}
\newtheorem{myprop}[mythm]{\protect\propositionname}
\renewenvironment{prop}{\begin{myprop}}{\end{myprop}}
\theoremstyle{plain}
\newtheorem{mycor}[mythm]{\protect\corollaryname}
\renewenvironment{cor}{\begin{mycor}}{\end{mycor}}
\theoremstyle{plain}
\newtheorem{mylem}[mythm]{\protect\lemmaname}
\renewenvironment{lem}{\begin{mylem}}{\end{mylem}}
\crefname{mylem}{Lemma}{Lemmas}
\theoremstyle{plain}
\newtheorem{myconjecture}[mythm]{\protect\conjecturename}
\renewenvironment{conjecture}{\begin{myconjecture}}{\end{myconjecture}}
\theoremstyle{remark}
\newtheorem{myrem}[mythm]{\protect\remarkname}
\renewenvironment{rem}{\begin{myrem}}{\end{myrem}}
\theoremstyle{plain}
\newtheorem{myclaim}[mythm]{\protect\claimname}
\renewenvironment{claim}{\begin{myclaim}}{\end{myclaim}}
\theoremstyle{definition}
\let\originalleft\left
\let\originalright\right
\renewcommand{\left}{\mathopen{}\mathclose\bgroup\originalleft}
\renewcommand{\right}{\aftergroup\egroup\originalright}
\renewcommand*{\UrlTildeSpecial}{%
  \do\~{%
    \mbox{%
      \fontfamily{ptm}\selectfont
      \textasciitilde
    }%
  }%
}%
\let\Url@force@Tilde\UrlTildeSpecial
\tikzstyle{vertex}=[circle,draw=black,fill=black,inner sep=0,minimum size=0.2cm,text=white,font=\footnotesize]
  \providecommand{\claimname}{Claim}
  \providecommand{\conjecturename}{Conjecture}
  \providecommand{\corollaryname}{Corollary}
  \providecommand{\definitionname}{Definition}
  \providecommand{\examplename}{Example}
  \providecommand{\lemmaname}{Lemma}
  \providecommand{\propositionname}{Proposition}
  \providecommand{\remarkname}{Remark}
\providecommand{\theoremname}{Theorem}
\begin{document}

\title{Bounded-degree spanning trees in randomly perturbed graphs}

\author{Michael Krivelevich \thanks{School of Mathematical Sciences, Raymond and Beverly Sackler Faculty
of Exact Sciences, Tel Aviv University, 6997801, Israel. Email: \protect\href{mailto:krivelev@post.tau.ac.il}{krivelev@post.tau.ac.il}.
Research supported in part by USA-Israel BSF Grant 2010115 and by
grant 912/12 from the Israel Science Foundation.}\and Matthew Kwan\thanks{Department of Mathematics, ETH, 8092 Zurich. Email: \protect\href{mailto:matthew.kwan@math.ethz.ch}{matthew.kwan@math.ethz.ch}}\and
Benny Sudakov\thanks{Department of Mathematics, ETH, 8092 Zurich. Email: \protect\href{mailto:benjamin.sudakov@math.ethz.ch}{benjamin.sudakov@math.ethz.ch}.
Research supported in part by SNSF grant 200021-149111.}}

\maketitle
\global\long\def\RR{\mathbb{R}}

\global\long\def\QQ{\mathbb{Q}}

\global\long\def\HH{\mathbb{H}}

\global\long\def\E{\mathbb{E}}

\global\long\def\Var{\operatorname{Var}}

\global\long\def\CC{\mathbb{C}}

\global\long\def\NN{\mathbb{N}}

\global\long\def\ZZ{\mathbb{Z}}

\global\long\def\GG{\mathbb{G}}

\global\long\def\BB{\mathbb{B}}

\global\long\def\DD{\mathbb{D}}

\global\long\def\cL{\mathcal{L}}

\global\long\def\supp{\operatorname{supp}}

\global\long\def\one{\boldsymbol{1}}

\global\long\def\range#1{\left[#1\right]}

\global\long\def\d{\operatorname{d}}

\global\long\def\falling#1#2{\left(#1\right)_{#2}}

\global\long\def\f{\mathbf{f}}

\global\long\def\im{\operatorname{im}}

\global\long\def\sp{\operatorname{span}}

\global\long\def\sign{\operatorname{sign}}

\global\long\def\mod{\operatorname{mod}}

\global\long\def\id{\operatorname{id}}

\global\long\def\disc{\operatorname{disc}}

\global\long\def\lindisc{\operatorname{lindisc}}

\global\long\def\tr{\operatorname{tr}}

\global\long\def\adj{\operatorname{adj}}

\global\long\def\Unif{\operatorname{Unif}}

\global\long\def\Po{\operatorname{Po}}

\global\long\def\Bin{\operatorname{Bin}}

\global\long\def\Ber{\operatorname{Ber}}

\global\long\def\Geom{\operatorname{Geom}}

\global\long\def\Hom{\operatorname{Hom}}

\global\long\def\floor#1{\left\lfloor #1\right\rfloor }

\global\long\def\ceil#1{\left\lceil #1\right\rceil }

\global\long\def\flip#1{\overline{#1}}

\global\long\def\a{\alpha}

\global\long\def\c{c}

\global\long\def\D{\Delta}

\global\long\def\G{G}

\global\long\def\F{F}

\global\long\def\T{T}

\global\long\def\R{R}

\global\long\def\B#1{\Gamma_{A,B}\left(#1\right)}

\begin{abstract}
We show that for any fixed dense graph $G$ and bounded-degree tree
$T$ on the same number of vertices, a modest random perturbation
of $G$ will typically contain a copy of $T$. This combines the viewpoints
of the well-studied problems of embedding trees into fixed dense graphs
and into random graphs, and extends a sizeable body of existing research
on randomly perturbed graphs. Specifically, we show that there is
$\c=\c\left(\a,\D\right)$ such that if $\G$ is an $n$-vertex graph
with minimum degree at least $\a n$, and $\T$ is an $n$-vertex
tree with maximum degree at most $\D$, then if we add $\c n$ uniformly
random edges to $\G$, the resulting graph will contain $\T$ asymptotically
almost surely (as $n\to\infty$). Our proof uses a lemma concerning
the decomposition of a dense graph into super-regular pairs of comparable
sizes, which may be of independent interest.
\end{abstract}

\section{Introduction}

A classical theorem of Dirac \cite{Dir52} states that any $n$-vertex
graph ($n\ge3$) with minimum degree at least $n/2$ has a Hamilton
cycle: a cycle that passes through all the vertices of the graph.
More recently, there have been many results showing that this kind
of minimum degree condition is sufficient to guarantee the existence
of different kinds of spanning subgraphs. For example, in \cite[Theorem~1$'$]{KSS95}
Koml\'os, S\'ark\"ozy and Szemer\'edi proved that for any $\Delta$
and $\gamma>0$, any $n$-vertex graph ($n$ sufficiently large) with
minimum degree $\left(1/2+\gamma\right)n$ contains every spanning
tree which has maximum degree at most $\Delta$. This has also been
generalized further in two directions. In \cite{KSS01} Koml\'os,
S\'ark\"ozy and Szemer\'edi improved their result to allow $\Delta$
to grow with $n$, and in \cite{BST09} B\"ottcher, Schacht and Taraz
gave a result for much more general spanning subgraphs than trees
(see also a result \cite{Csa08} due to Csaba).

The constant $1/2$ in the above Dirac-type theorems is tight: in
order to guarantee the existence of these spanning subgraphs we require
very strong density conditions. But the situation is very different
for a ``typical'' large graph. If we fix an arbitrarily small $\alpha>0$
and select a graph uniformly at random among the (labelled) graphs
with $n$ vertices and $\alpha{n \choose 2}$ edges, then the degrees
will probably each be about $\alpha n$. Such a random graph is Hamiltonian
with probability $1-o\left(1\right)$ (we say it is Hamiltonian \emph{asymptotically
almost surely}, or \emph{a.a.s.}). This follows from a stronger result
by P\'osa \cite{Pos76} that gives a \emph{threshold} for Hamiltonicity:
a random $n$-vertex, $m$-edge graph is Hamiltonian a.a.s. if $m\gg n\log n$,
and fails to be Hamiltonian a.a.s. if $m\ll n\log n$. Although the
exact threshold for bounded-degree spanning trees is not known, in
\cite{Mon14} Montgomery proved that for any $\Delta$ and any tree
$T$ with maximum degree at most $\Delta$, a random $n$-vertex graph
with $\Delta n\left(\log n\right)^{5}$ edges a.a.s. contains $T$.
Here and from now on, all asymptotics are as $n\to\infty$, and we
implicitly round large quantities to integers.

In \cite{BFM03}, Bohman, Frieze and Martin studied Hamiltonicity
in the random graph model that starts with a dense graph and adds
$m$ random edges. This model is a natural generalization of the ordinary
random graph model where we start with nothing, and offers a ``hybrid''
perspective combining the extremal and probabilistic settings of the
last two paragraphs. The model has since been studied in a number
of other contexts; see for example \cite{BHM04,KST06,KKS15}. A property
that holds a.a.s. with small $m$ in our random graph model can be
said to hold not just for a ``globally'' typical graph but for the
typical graph in a small ``neighbourhood'' of our space of graphs.
This tells us that the graphs which fail to satisfy our property are
in some sense ``fragile''. We highlight that it is generally very
easy to transfer results from our model of random perturbation to
other natural models, including those that delete as well as add edges
(this can be accomplished with standard coupling and conditioning
arguments). We also note that a particularly important motivation
is the notion of \emph{smoothed analysis} of algorithms introduced
by Spielman and Teng in \cite{ST04}. This is a hybrid of worst-case
and average-case analysis, studying the performance of algorithms
in the more realistic setting of inputs that are ``noisy'' but not
completely random.

The statement of \cite[Theorem~1]{BFM03} is that for every $\alpha>0$
there is $c=c\left(\alpha\right)$ such that if we start with a graph
with minimum degree at least $\alpha n$ and add $cn$ random edges,
then the resulting graph will a.a.s. be Hamiltonian. This saves a
logarithmic factor over the usual model where we start with the empty
graph. Note that some dense graphs require a linear number of extra
edges to become Hamiltonian (consider for example the complete bipartite
graph with partition sizes $n/3$ and $2n/3$), so the order of magnitude
of this result is tight.

Let $\GG\left(n,p\right)$ be the binomial random graph model with
vertex set $\range n=\left\{ 1,\dots,n\right\} $, where each edge
is present independently and with probability $p$. (For our purposes
this is equivalent to the model that uniformly at random selects a
$p{n \choose 2}$-edge graph on the vertex set $\range n$). In this
paper we prove the following theorem, extending the aforementioned
result to bounded-degree spanning trees.
\begin{thm}
\label{thm:main-theorem}There is $\c=\c\left(\a,\D\right)$ such
that if $\G$ is a graph on the vertex set $\range n$ with minimum
degree at least $\a n$, and $\T$ is an $n$-vertex tree with maximum
degree at most $\D$, and $\R\in\GG\left(n,\c/n\right)$, then a.a.s.
$\T\subseteq\G\cup\R$.
\end{thm}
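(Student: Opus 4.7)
The plan is to combine a regularity-based decomposition of $\G$ with a compatible decomposition of $\T$, using the random edges of $\R$ to glue the tree pieces together --- in the spirit of the Bohman--Frieze--Martin approach to Hamiltonicity in randomly perturbed graphs, adapted to the tree setting.

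I would first apply the super-regular decomposition lemma promised in the abstract to $\G$: after discarding a small exceptional set, partition $V(\G)$ into pairs $(X_1,Y_1),\dots,(X_k,Y_k)$ with $k=k(\a,\D)$ constant, each $(X_i,Y_i)$ being $\varepsilon$-super-regular in $\G$ with $|X_i|,|Y_i|$ comparable, for some $\varepsilon$ small relative to $\a$ and $1/\D$. I would then decompose $\T$ to match this partition by exploiting the classical fact that any $n$-vertex tree of maximum degree $\D$ contains many long disjoint \emph{bare paths} (paths whose internal vertices have $\T$-degree exactly $2$). Cutting $\T$ at carefully chosen internal vertices of such bare paths produces subtrees $\T_1,\dots,\T_k$ whose orders and bipartition class sizes exactly match $(|X_i|,|Y_i|)$ for each $i$; the $k-1$ cut edges, which I call the \emph{bridge edges}, form a tree structure on the pieces.

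Next, I would expose $\R$ and greedily realize each bridge edge as a random edge joining appropriate linear-sized subsets of some $X_i\cup Y_i$ and $X_j\cup Y_j$ (on the correct color classes). Since there are only $O(1)$ bridges and the target sets have size $\Theta(n)$, for $\c=\c(\a,\D)$ large enough such a choice exists simultaneously a.a.s.\ in $\R$. The chosen random edges prescribe images for the boundary vertices of each $\T_i$. Finally, for each $i$ I would embed $\T_i$ into the super-regular pair $(X_i,Y_i)$ with those $O(1)$ pre-specified anchor images, using a bipartite variant of the Koml\'os--S\'ark\"ozy--Szemer\'edi tree-embedding technique (taking care to preserve usable neighborhoods near the anchors throughout the embedding).

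The main obstacle is the bipartition matching: bounded-degree trees need not have balanced bipartitions, while the pair sizes are essentially imposed by $\G$. The flexibility comes from the bare paths: shifting the cut site along a bare path by one vertex transfers one vertex between color classes on each side of the cut, so enough bare paths of enough length provide enough degrees of freedom to hit all target sizes at once. Coordinating these shifts simultaneously, and verifying that any bounded-degree tree admits a bare-path system rich enough for the purpose, is the delicate technical heart of the argument; the other steps (super-regular decomposition of $\G$, bridge realization in $\R$, and anchored bipartite embedding) are more standard once the matching step is in place.
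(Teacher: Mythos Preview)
There is a genuine gap: the ``classical fact'' you invoke is false. A bounded-degree tree need not have many (or any) long bare paths; a complete $\Delta$-ary tree, or more generally any tree with $\Theta(n)$ leaves, has almost no vertices of degree two and hence essentially no bare paths to cut along. The correct statement (Krivelevich's lemma, used in the paper as \ref{lem:bare-paths-if-no-leaves}) is conditional: a tree with at most $\ell$ leaves has many bare paths. This is why the paper splits into two cases. When $T$ has $\Omega(n)$ leaves, the paper deletes $\lambda n$ leaves, embeds the resulting $(1-\lambda)n$-vertex tree entirely into the random graph $R$ via the almost-spanning tree lemma (\ref{lem:almost-spanning}), and then reattaches the leaves using a Hall-type matching argument in $G\cup R$. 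Your scheme has no mechanism for this case.

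Even in the few-leaves case, your bipartition-matching step is more problematic than you suggest. The super-regular decomposition of $G$ (\ref{lem:partition-into-blobs}) does \emph{not} produce balanced pairs: it only guarantees $|V_i^1|/|V_i^2|\le\rho(\alpha)$, and the ratio is determined by $G$, not by $T$. If $T$ is a Hamilton path, every subpath has bipartition classes differing by at most one, so no amount of cut-shifting will produce a piece whose bipartition matches an unbalanced pair $(X_i,Y_i)$. The paper sidesteps this entirely: rather than embedding tree pieces into the super-regular pairs, it deletes the interiors of many bare $k$-paths to obtain a forest $F$ on $n/2$ vertices, embeds $F$ into $R$ alone (again via \ref{lem:almost-spanning}), and is then left only with the task of connecting designated ``special pairs'' by paths of length $k$ through the remaining vertices. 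After an adjustment phase that routes some of these paths through $R$ to equalise cluster occupancies, the remaining paths are found inside each super-regular pair via the blow-up lemma. So the super-regular pairs in the paper host only collections of $k$-paths, never arbitrary subtrees, and the delicate size-matching is between path counts and cluster sizes rather than between tree bipartitions and cluster sizes.
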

One of the ingredients of the proof is the following lemma, due to
Alon and two of the authors (\cite[Theorem~1.1]{AKS07}). It shows
that the random edges in $\R$ are already enough to embed almost-spanning
trees.
\begin{lem}
\label{lem:almost-spanning}There is $\c=\c\left(\varepsilon,\D\right)$
such that $\G\in\GG\left(n,\c/n\right)$ a.a.s. contains every tree
of maximum degree at most $\D$ on $\left(1-\varepsilon\right)n$
vertices.
\end{lem}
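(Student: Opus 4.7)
My plan is to combine expansion properties of $\GG(n,\c/n)$ with an iterated Friedman--Pippenger tree embedding. A naive attempt to embed a truly spanning tree fails because $\GG(n,\c/n)$ for constant $\c$ a.a.s.\ has linearly many low-degree vertices, so I will instead work inside a large high-quality subgraph $V^*\subseteq[n]$ and absorb the $\varepsilon n$ slack into unused vertices.

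\textbf{Step 1 (Expansion).} For $\c=\c(\varepsilon,\D)$ large enough, I would show that a.a.s.\ $\G\sim\GG(n,\c/n)$ admits a subset $V^*\subseteq[n]$ with $|V^*|\ge(1-\varepsilon/2)n$ such that, writing $k:=\varepsilon n/(400\D)$, every $S\subseteq V^*$ with $|S|\le 2k$ satisfies $|N_\G(S)\cap V^*|\ge(\D+1)|S|$. The set $V^*$ is $[n]$ minus the vertices of degree below $\c/4$; Chernoff bounds make this exceptional set of size at most $\varepsilon n/2$ w.h.p.\ once $\c$ is large. The expansion bound is a routine first-moment union-bound over $s\le 2k$ of $\binom{n}{s}\binom{n}{(\D+1)s}(1-\c/n)^{s(n-(\D+2)s)}$, which is $o(1)$ for $\c$ large depending on $\varepsilon,\D$.

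\textbf{Step 2 (Tree decomposition).} I would decompose $\T$ into a skeleton $\T_0$ of at most $k$ vertices together with pendant subtrees $\T_1,\ldots,\T_t$, each of size at most $k$, each joined to an earlier piece by a single edge. To achieve this, root $\T$ arbitrarily and repeatedly prune a subtree of size in $[k/(2\D),k]$: while the remaining tree has more than $k$ vertices, take the bottommost node $v$ whose rooted subtree has size at least $k/(2\D)$; since each child of $v$ has subtree size below $k/(2\D)$ and $v$ has at most $\D$ children, the subtree at $v$ has size at most $k$. The number of pieces is $t=O(\D^2/\varepsilon)$, independent of $n$.

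\textbf{Step 3 (Iterated embedding).} Embed $\T_0$ into $V^*$ by Friedman--Pippenger, using the expansion from Step 1 (applicable since $|\T_0|\le k$, so expansion for sets of size $\le 2k$ suffices). Then for $i=1,\ldots,t$ in turn, embed $\T_i$ by first choosing an image for its root among the unused neighbors of its parent's image in $V^*$ (available since that parent has $\Theta(\c)$ neighbors in $V^*$ and only $\D$ are used by siblings), then applying Friedman--Pippenger within the remaining unused portion of $V^*$ to complete $\T_i$. Since the total used set has size at most $(1-\varepsilon)n$ throughout, the unused portion of $V^*$ always has size at least $\varepsilon n/2$, and the expansion from Step 1 (applied to subsets of this residual set) continues to hold.

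The main technical obstacle is that the expansion of Step 1 must survive vertex consumption: it must hold for small subsets of any large subset of $V^*$, not merely of $V^*$ itself. This is automatic in the first-moment argument, since the Chernoff-type bound depends only on the size of the ambient set and not on its identity. With this in hand, each Friedman--Pippenger application in Step 3 proceeds within the current residual graph, and the embedding terminates after $O(\D^2/\varepsilon)$ subtree attachments.
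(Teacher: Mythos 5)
The paper does not actually prove this lemma; it is quoted from Alon, Krivelevich and Sudakov \cite{AKS07}, and your overall strategy (an expansion property of the random graph plus a piecewise Friedman--Pippenger embedding) is in the same spirit as theirs. However, there are two genuine gaps. First, the union bound you write in Step 1, namely $\sum_{s}\binom{n}{s}\binom{n}{(\D+1)s}(1-\c/n)^{s(n-(\D+2)s)}$, is not $o(1)$ for constant $\c$: the $s=1$ term is already of order $n^{\D+2}e^{-\c}\to\infty$, and the sum only becomes manageable once $s$ is linear in $n$. Restricting attention to $V^*$ does not rescue the computation as stated, because the bound as written never uses the degree condition; moreover a single round of deleting low-degree vertices does not even guarantee that each $v\in V^*$ has $\D+1$ neighbours inside $V^*$ (for constant $\c$ one expects vertices of degree about $\c/4$ all of whose neighbours have small degree). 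Making small and intermediate-sized sets expand requires an iterated core-type deletion together with a separate argument that small sets span few edges, and this is the actual technical heart of \cite{AKS07}, not a routine first-moment calculation.

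Second, and more fundamentally, Step 3 fails: expansion of small subsets of $V^*$ \emph{into $V^*$} does not survive the deletion of the up to $(1-\varepsilon)n$ vertices already used. Your justification --- that the first-moment bound depends only on the size of the ambient set and not on its identity --- is not what a union bound over pairs $(S,T)$ delivers; it gives a lower bound on $|N(S)\cap V^*|$, not on $|N(S)\cap W|$ for every large $W\subseteq V^*$. To guarantee $|N(S)\cap(V^*\setminus U)|\ge(\D+1)|S|$ for \emph{every} used set $U$ with $|U|$ close to $(1-\varepsilon)n$, one would need $|N(S)|\ge(\D+1)|S|+|U|$, i.e.\ every small set $S$ would have to dominate almost all of $V^*$, which is absurd already for $|S|=1$. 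This is precisely why the nearly-spanning regime is hard, and why one uses Haxell's strengthening of the Friedman--Pippenger theorem, whose expansion hypothesis for larger sets carries an additive term comparable to the size of the tree being embedded; that additive slack is what makes the embedding robust to the consumption of vertices. Without some such mechanism, the iterated embedding in Step 3 can get stuck long before the last pendant subtree is placed.
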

We will split the proof of \ref{thm:main-theorem} into two cases
in a similar way to \cite{Kri10}. If our spanning tree $\T$ has
many leaves, then we remove the leaves and embed the resulting non-spanning
tree in $\R$ using \ref{lem:almost-spanning}. To complete this into
our spanning tree $\T$, it remains to match the vertices needing
leaves with leftover vertices. We show that this is possible by verifying
that a certain Hall-type condition typically holds.

The more difficult case is where $\T$ has few leaves, which we attack
in \ref{sub:few leaves}. In this case $\T$ cannot be very ``complicated''
and must be a subdivision of a small tree. In particular $\T$ must
have many long \emph{bare paths}: paths where each vertex has degree
exactly two. By removing these bare paths we obtain a small forest
which we can embed into $\R$ using \ref{lem:almost-spanning}. In
order to complete this forest into our spanning tree $\T$, we need
to join up distinguished ``special pairs'' of vertices with disjoint
paths of certain lengths.

In order to make this task feasible, we first use Szemer\'edi's regularity
lemma to divide the vertex set into a bounded number of ``super-regular
pairs'' (basically, dense subgraphs with edges very well-distributed
in $\G$). After embedding our small forest and making a number of
further adjustments, we can find our desired special paths using super-regularity
and a tool called the ``blow-up lemma''.

\section{Proof of \texorpdfstring{}{Theorem~}\ref{thm:main-theorem}}

We split the random edges into multiple independent ``phases'':
say $\R\supseteq\R_{1}\cup\R_{2}\cup\R_{3}\cup\R_{4}$, where $\R_{i}\in\GG\left(n,\c_{i}/n\right)$
for some large $\c_{i}=\c_{i}\left(\a,\D\right)$ to be determined
later (these $\c_{i}$ will in turn determine $\c$). Let $V=\range n$
be the common vertex set of $\G$, $\R$ and the $\R_{i}$.
\begin{rem}
\label{rem:symmetry}At several points in the proof we will prove
that (with high probability) there exist certain substructures in
the random edges $\R$, and we will assume by symmetry that these
substructures (or some corresponding vertex sets) are themselves uniformly
random. A simple way to see why this kind of reasoning is valid is
to notice that if we apply a uniformly random vertex permutation to
a random graph $\R\in\GG\left(n,p\right)$, then by symmetry the resulting
distribution is still $\GG\left(n,p\right)$. So we can imagine that
we are finding substructures in an auxiliary random graph, then randomly
mapping these structures to our vertex set.
\end{rem}

\subsection{Case 1: $\protect\T$ has many leaves\label{sub:many-leaves}}

For our first case suppose there are at least $\lambda n$ leaves
in $\T$ (for some fixed $\lambda=\lambda\left(\a\right)>0$ which
will be determined in the second case). Then consider a tree $\T'$
with some $\lambda n$ leaves removed. By \ref{lem:almost-spanning}
we can a.a.s. embed $\T'$ into $\R_{1}$ (abusing notation, we also
denote this embedded subgraph by $\T'$). Let $A\subseteq V$ be the
set of vertices of $\T'$ which had a leaf deleted from them, and
for $a\in A$ let $\ell\left(a\right)\le\Delta$ be the number of
leaves that were deleted from $a$. Let $B\subseteq V$ be the set
of $\lambda n$ vertices not part of $\T'$. In order to complete
the embedding of $\T'$ into an embedding of $\T$ it suffices to
find a set of disjoint stars in $\G\cup\R_{2}$ each with a center
vertex $a\in A$ and with $\ell\left(a\right)$ leaves in $B$. We
will prove the existence of such stars with the following Hall-type
condition.
\begin{lem}
\label{lem:hall-stars}Consider a bipartite graph $\G$ with bipartition
$A\cup B$, and consider a function $\ell:A\to\NN$ such that $\sum_{a\in A}\ell\left(a\right)=\left|B\right|$.
Suppose that 
\[
\left|N\left(S\right)\right|\ge\sum_{s\in S}\ell\left(s\right)\quad\mbox{for all }S\subseteq A.
\]
Then there is a set of disjoint stars in $\G$, each of which has
a center vertex $a\in A$ and $\ell\left(a\right)$ leaves in $B$.
\end{lem}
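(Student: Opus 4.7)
The plan is to reduce to the classical Hall marriage theorem by splitting each $a\in A$ into $\ell(a)$ copies. Concretely, I would define an auxiliary bipartite graph $H$ with parts $A'$ and $B$, where $A'$ is obtained from $A$ by replacing each vertex $a$ with $\ell(a)$ clones $a^{(1)},\dots,a^{(\ell(a))}$, and each clone $a^{(i)}$ is joined in $H$ to exactly the neighbours of $a$ in $\G$. By hypothesis $|A'|=\sum_{a\in A}\ell(a)=|B|$, so a perfect matching of $H$ would assign to every clone of $a$ a distinct partner in $B$, and collecting the partners of the clones of $a$ as the leaves of a star centred at $a$ immediately produces the desired family of disjoint stars.

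It remains to verify Hall's condition for $H$. Given $S'\subseteq A'$, let $S\subseteq A$ be the set of vertices having at least one clone in $S'$. Since all clones of $a$ have the same neighbourhood in $B$ as $a$ does in $\G$, we have $N_H(S')=N_\G(S)$, and $|S'|\le\sum_{s\in S}\ell(s)$ by construction. Applying the hypothesis of the lemma to $S$ gives
\[
|N_H(S')|=|N_\G(S)|\ge\sum_{s\in S}\ell(s)\ge|S'|,
\]
so Hall's condition holds and $H$ admits a perfect matching. Translating this matching back yields the stars.

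There is essentially no obstacle here: once the cloning construction is in place, Hall's theorem does all the work. The only minor point to be careful about is handling the case $\ell(a)=0$ (simply omit such $a$ from the construction, or equivalently ignore vertices with no clones), and noting that the stars are automatically vertex-disjoint because distinct clones are matched to distinct elements of $B$ and distinct $a\in A$ give disjoint centres.
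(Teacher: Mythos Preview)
Your proof is correct and is exactly the approach the paper indicates: it says the lemma ``can be easily proved by applying Hall's marriage theorem to an auxiliary bipartite graph which has $\ell(a)$ copies of each vertex $a\in A$,'' which is precisely your cloning construction and verification of Hall's condition.
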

\ref{lem:hall-stars} can be easily proved by applying Hall's marriage
theorem to an auxiliary bipartite graph which has $\ell\left(a\right)$
copies of each vertex $a\in A$.

In this section, and at several points later in the paper, we will need to consider the intersection of random sets with fixed sets. The following concentration inequality (taken from \cite[Theorem 2.10]{JLR00}) will be useful.

\begin{lem}\label{lem:hypergeometric-concentration}For a set $\Gamma$ with $|\Gamma|=N$, a fixed subset $\Gamma'\subseteq \Gamma$ with $\Gamma'=m$, and a uniformly random $n$-element subset $\Gamma_n\subseteq \Gamma$, define the random variable $X=|\Gamma_n\cap\Gamma'|$. This random variable is said to have the \emph{hypergeometric distribution} with parameters $N$,$n$ and $m$. Noting that $\E X=mn/N$, we have
\[
\Pr\left(|X-\E X|\ge \varepsilon \E X\right)\le \exp(c_\varepsilon \E X)
\]
for some constant $c_\varepsilon$ depending on $\varepsilon$.
\end{lem}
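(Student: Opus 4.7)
The plan is to derive this Chernoff-type bound from the corresponding bound for the binomial distribution, via Hoeffding's majorization trick. Writing $\mu=\E X=mn/N$, the target is $\Pr\bigl(|X-\mu|\ge\varepsilon\mu\bigr)\le e^{-c_\varepsilon\mu}$ for some $c_\varepsilon>0$ (the displayed inequality in the statement is missing a minus sign in the exponent, but this is the only sensible reading). The argument follows the standard two-sided Chernoff template.

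The key input is Hoeffding's classical majorization lemma: for every convex function $\phi:\RR\to\RR$,
\[
\E\phi(X)\;\le\;\E\phi(Y),\qquad\text{where }Y\sim\Bin(n,m/N).
\]
This is proved by representing $X=X_{1}+\cdots+X_{n}$, where $X_{i}$ is the indicator that the $i$-th element drawn from $\Gamma$ without replacement lies in $\Gamma'$, and comparing term-by-term with $Y=Y_{1}+\cdots+Y_{n}$ obtained by sampling with replacement. A short exchangeability/conditioning argument, together with Jensen's inequality applied within each conditional expectation, gives the bound.

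Applying the lemma with $\phi(x)=e^{tx}$ for $t\ge0$ yields the moment generating function estimate
\[
\E e^{tX}\;\le\;\bigl(1-m/N+(m/N)e^{t}\bigr)^{n},
\]
after which the routine Markov-plus-optimization step in $t$ gives $\Pr(X\ge(1+\varepsilon)\mu)\le\exp(-c_\varepsilon\mu)$; one can take, for instance, $c_\varepsilon=\varepsilon^{2}/3$ when $\varepsilon\le1$. The lower tail $\Pr(X\le(1-\varepsilon)\mu)$ is handled identically by taking $\phi(x)=e^{-tx}$ with $t\ge0$, then combining the two bounds via a union bound.

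The only substantive obstacle is establishing the Hoeffding majorization; the remainder is a textbook Chernoff computation. Since the statement is quoted from a standard reference, in practice one simply cites it rather than reproving it.
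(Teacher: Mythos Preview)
Your sketch is correct and follows the standard route (Hoeffding's convex-majorization of the hypergeometric by the binomial, then the textbook Chernoff optimization), and you also correctly flag the missing minus sign in the exponent. However, the paper does not actually prove this lemma at all: it is introduced with the phrase ``taken from \cite[Theorem~2.10]{JLR00}'' and is simply quoted as a black box, exactly as you anticipate in your final sentence. So there is nothing to compare against beyond noting that your argument is essentially the proof one finds in the cited reference.
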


Now, in accordance with \ref{rem:symmetry} we can assume $A$ and $B$
are uniformly random disjoint subsets of $V$ of their size (note
$\left|A\right|\ge\lambda n/\D$ and $\left|B\right|=\lambda n$),
not depending on $\G$. For each $a\in A$, the random variable $\left|N_{\G}\left(a\right)\cap B\right|$ is hypergeometrically distributed with expected value at least $\lambda\a n$,
and similarly each $\left|N_{\G}\left(b\right)\cap A\right|$ ($b\in B$)
is hypergeometric with expectation at least $\lambda\a n/\D$. Let
$\beta=\lambda\a/\left(2\D\right)$; by \ref{lem:hypergeometric-concentration}
and the union bound, in $\G$ each $a\in A$ a.a.s. has at least $\beta n$
neighbours in $B$, and each $b\in B$ a.a.s. has at least $\beta n$
neighbours in $A$. That is to say, the bipartite graph induced by
$\G$ on the bipartition $A\cup B$ has minimum degree at least $\beta n$.
We treat $A$ and $B$ as fixed sets satisfying this property. It
suffices to prove the following lemma.
\begin{lem}
\label{lem:bipartite-perturbed-stars}For any $\beta>0$ and $\Delta\in\RR$
there is $\c=\c\left(\beta\right)$ such that the following holds.
Suppose $\G$ is a bipartite graph with bipartition $A\cup B$ ($\left|A\right|,\left|B\right|\le n$)
and minimum degree at least $\beta n$. Suppose $\ell:A\to\range{\Delta}$
is some function satisfying $\sum_{a\in A}\ell\left(a\right)=\left|B\right|$.
Let $\R$ be the random bipartite graph where each edge between $A$
and $B$ is present independently and with probability $\c/n$. Then
a.a.s. $\G\cup\R$ satisfies the condition in \ref{lem:hall-stars}.\end{lem}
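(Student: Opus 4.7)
My approach is a direct union bound over potential witnesses to the failure of the Hall-type condition from \ref{lem:hall-stars}. Suppose that condition fails in $\G\cup\R$ for some $S\subseteq A$, and set $k:=\sum_{s\in S}\ell(s)$ and $T:=B\setminus N_{\G\cup\R}(S)$; then $|T|\ge|B|-k+1$, and there are no $\G$- or $\R$-edges between $S$ and $T$. The goal is to show that both $|S|$ and $|T|$ must be linear in $n$, so that the probability that any specific $(S,T)$ is a witness is exponentially small, and we can afford to union bound over $4^n$ pairs.

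The first step is the lower bound on $|S|$. If $|S|\le\beta n/\Delta$, then $k\le\Delta|S|\le\beta n$, while any nonempty $S$ trivially inherits $|N_\G(S)|\ge\beta n$ from a single vertex, so the Hall condition already holds for $S$ in $\G$ alone. Thus a bad $S$ has $|S|\ge\beta n/\Delta$. The second step is the lower bound on $|T|$. Every $b\in T$ has its $\ge\beta n$ $\G$-neighbours inside $A\setminus S$, so $|A\setminus S|\ge\beta n$; and since $\ell(a)\ge 1$ for every $a\in A$ (this is the key use of the hypothesis $\ell:A\to[\Delta]$),
\[
k \;=\; |B|-\sum_{a\in A\setminus S}\ell(a) \;\le\; |B|-|A\setminus S|,
\]
which combined with $|T|\ge|B|-k+1$ yields $|T|\ge|A\setminus S|+1\ge\beta n+1$.

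The final step is the union bound. For any fixed pair $(S,T)$ with $|S|\ge\beta n/\Delta$ and $|T|\ge\beta n$, the probability that $\R$ contains no edge between $S$ and $T$ is at most $(1-c/n)^{|S||T|}\le\exp(-c\beta^2 n/\Delta)$; and there are at most $2^{|A|+|B|}\le 4^n$ such pairs. Hence
\[
\Pr(\text{Hall's condition fails}) \;\le\; 4^n\exp(-c\beta^2 n/\Delta),
\]
which is $o(1)$ once $c=c(\beta,\Delta)$ is chosen above the threshold $2\Delta(\log 2)/\beta^2$.

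The only delicate point is the bound $|T|\ge\beta n$ in the second step: without the assumption that $\ell(a)\ge 1$ on all of $A$, the set $T$ could in principle be very small while $S$ covers almost all of $A$, and the union bound would break down. Everything else is routine, so I do not anticipate a further serious obstacle.
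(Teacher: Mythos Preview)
Your proof is correct and follows essentially the same approach as the paper: both rule out small $S$ via the minimum degree in $\G$, rule out large $S$ (equivalently, force $|A\setminus S|\ge\beta n$) via the degree condition on the $B$-side, and handle the intermediate range by a union bound over pairs of linear-sized sets with no $\R$-edge between them. Your observation that $c$ must depend on $\Delta$ as well as $\beta$ is correct---the paper's statement $c=c(\beta)$ is a minor imprecision, since the proof there also requires $c\beta^2/\Delta$ to beat the $4^n$ union bound.
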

\begin{proof}
Let $S\subseteq A$ be nonempty. If $\left|S\right|\le\beta n/\Delta$
then $\left|N_{\G}\left(S\right)\right|\ge\left|N_{\G}\left(a\right)\right|\ge\beta n\ge\sum_{s\in S}\ell\left(s\right)$
for any $a\in S$. If $\left|S\right|\ge\left|A\right|-\beta n$ then
$\left|A\backslash S\right|\le\left|N_{\G}\left(b\right)\right|$
for each $b\in B$, so $\left|N_{\G}\left(S\right)\right|=\left|B\right|\ge\sum_{s\in S}\ell\left(s\right)$.

The remaining case is where $\beta n/\Delta\le\left|S\right|\le\left|A\right|-\beta n$.
In this case $\sum_{s\in S}\ell\left(s\right)=\left|B\right|-\sum_{a\in A\backslash S}\ell\left(a\right)\le\left|B\right|-\beta n$.
Now, for any subsets $A'\subseteq A$ and $B'\subseteq B$ with $\left|A'\right|=\beta n/\Delta$
and $\left|B'\right|=\beta n$, the probability there is no edge between
$A'$ and $B'$ is $\left(1-\c/n\right)^{\left(\beta n/\Delta\right)\left(\beta n\right)}\le e^{-\c\beta^{2}n/\Delta}$.
There are at most $2^{2n}$ choices of such $A',B'$, so for large
$\c$, by the union bound there is a.a.s. an edge between any such
pair of sets. If this is true, it follows that $\left|N_{\R}\left(S\right)\right|\ge\left|B\right|-\beta n$,
because if $S$ had more than $\beta n$ non-neighbours in $B$ then
this would give us a contradictory pair of subsets with no edge between
them. We have proved that a.a.s. $\left|N_{\R}\left(S\right)\right|\ge\sum_{s\in S}\ell\left(s\right)$
for all $S\subseteq A$, as required.
\end{proof}

\subsection{Case 2: $\protect\T$ has few leaves\label{sub:few leaves}}

\global\long\def\h{h}

Now we address the second case where there are fewer than $\lambda n$
leaves in $\T$. The argument is broken into a number of subsections,
which we outline here. The first step (carried out in \ref{sub:partition-into-blobs})
is to divide $\G$ into a bounded number of pairs of ``partner clusters''
with edges well-distributed between them. Specifically, the edges
between each pair of partner clusters will each have a property called
``super-regularity''\emph{.} The significance of this is that one
can use a tool called the ``blow up lemma'' to easily embed bounded-degree
spanning structures into super-regular pairs.

Guided by our partition into pairs of clusters, we then embed most
of $\T$ into $\G\cup\R$ in a step-by-step fashion. The objective
is to do this in such a way that, afterwards, the remaining connected
components of $\T$ can each be assigned to their own cluster-pair,
so that we can finish the embedding in each cluster-pair individually,
with the blow-up lemma.

As outlined in the introduction, the fact that $\T$ has few leaves
means that it is mostly comprised of long \emph{bare paths}: paths
where every vertex has degree two. The first step in the embedding
is to embed the non-bare-path parts of $\T$ using only the random
edges in $\R$. That is, we obtain a forest $\F$ by removing bare
paths from $\T$, then we embed $\F$ into $\R$ using \ref{lem:almost-spanning}.
This step is performed in \ref{sub:embed-forest}.

After embedding $\F$, it remains to connect certain ``special pairs''
of vertices with paths of certain lengths in $\G\cup\R$, using the
vertices that were not yet used to embed $\F$. (That is, we need
to embed the bare paths we deleted from $\T$ to obtain $\F$). There
are a few problems we need to overcome before we can apply the blow-up
lemma to accomplish this. First, the vertices in special pairs, in
general, lie in totally different clusters in our partition, so we
cannot hope to embed each path in a single cluster-pair. In \ref{sub:fix-endpoints},
we correct this problem by finding very short paths from the vertices
of each special pair to a common cluster-pair.

The next issue is that the relative sizes of the cluster-pairs will
not in general match the number of special pairs they contain. For
example, if a cluster-pair contains $N$ special pairs which we need
to connect with paths of length $k$, then we need that cluster-pair
to have exactly $\left(k-1\right)N$ vertices that were not used for
anything so far. To fix this problem, in \ref{sub:cluster-adjustment}
we already start to embed some of the paths between special pairs,
mainly using the random edges in $\R$. We choose the vertices for
these paths in a very specific way, to control the relative quantities
of remaining vertices in the clusters.

After these adjustments, in \ref{sub:blow-up} we are able to complete
the embedding of $\T$. We show that finding paths between distinguished
vertices is equivalent to finding cycles with certain properties in
a certain auxiliary graph. These cycles can be found in a straightforward
manner using the blow-up lemma and super-regularity.

\subsubsection{Partitioning into super-regular pairs\label{sub:partition-into-blobs}}

As outlined, we first need to divide $\G$ into a bounded number of
pairs of ``partner clusters'' with well-distributed edges.
\begin{defn}
For a disjoint pair of vertex sets $\left(X,Y\right)$ in a graph
$G$, let its \emph{density} $d\left(X,Y\right)$ be the number of
edges between $X$ and $Y$, divided by $\left|X\right|\left|Y\right|$.
A pair of vertex sets $\left(V_{1},V_{2}\right)$ is said to be \emph{$\varepsilon$-regular}
in $G$ if for any $U_{1},U_{2}$ with $U_{\h}\subseteq V_{\h}$ and
$\left|U_{\h}\right|\ge\varepsilon\left|V_{\h}\right|$, we have $\left|d\left(U_{1},U_{2}\right)-d\left(V_{1},V_{2}\right)\right|\le\varepsilon$.
If alternatively $d\left(U_{1},U_{2}\right)\ge\delta$ for all such
pairs $U_{1},U_{2}$ then we say $\left(V_{1},V_{2}\right)$ is \emph{$\left(\varepsilon,\delta\right)$-dense}.
Let $\flip{\h}=2-\h$; if $\left(V_{1},V_{2}\right)$ is $\left(\varepsilon,\delta\right)$-dense
and moreover each $v\in V_{\h}$ has at least $\delta\left|V_{\flip{\h}}\right|$
neighbours in $V_{\flip{\h}}$, then we say $\left(V_{1},V_{2}\right)$
is \emph{$\left(\varepsilon,\delta\right)$-super-regular}.\end{defn}
\begin{lem}
\label{lem:partition-into-blobs}For $\a,\varepsilon>0$ with $\varepsilon$
sufficiently small relative to $\a$, there are $\delta=\delta\left(\a\right)>0$,
$\rho=\rho\left(\a\right)$ and $Q=Q\left(\a,\varepsilon\right)$,
such that the following holds. Let $\G$ be an $n$-vertex graph with
minimum degree at least $\a n$. Then there is $q\le Q$ and a partition
of $V\left(\G\right)$ into clusters $V_{i}^{\h}$ ($1\le i\le q$,
$\h=1,2$) such that each pair $\left(V_{i}^{1},V_{i}^{2}\right)$
is $\left(\varepsilon,\delta\right)$-super-regular. Moreover, each
$\left|V_{i}^{\h}\right|/\left|V_{j}^{g}\right|\le\rho$.
\end{lem}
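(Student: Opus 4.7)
The plan is to apply Szemer\'edi's regularity lemma to $\G$, then use a matching-based argument in the resulting reduced graph to pair up clusters into super-regular pairs, with unmatched clusters and exceptional vertices handled by careful redistribution.

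First, apply Szemer\'edi's regularity lemma with a small parameter $\varepsilon_{0}$ (chosen much smaller than both $\varepsilon$ and $\a$), obtaining an equipartition $V_{0},V_{1},\ldots,V_{k}$ with $|V_{0}|\le\varepsilon_{0}n$ and $|V_{i}|=m$ for $i\ge1$. Let the \emph{reduced graph} $R$ on $[k]$ have an edge $ij$ whenever $(V_{i},V_{j})$ is $\varepsilon_{0}$-regular with density at least $d_{0}$, for a small $d_{0}=d_{0}(\a)$. A standard double-counting using the minimum degree condition on $\G$ shows that $R$ has minimum degree at least $(\a-O(d_{0}+\varepsilon_{0}))k$, hence at least $(\a/2)k$.

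Next, find a maximum matching $M$ in $R$. We cannot guarantee $\a>1/2$, so $M$ need not be perfect; however, the Erd\H{o}s--Gallai theorem applied to $R$ (which has $\Omega(\a k^{2})$ edges) gives $|M|\ge\Omega(\a k)$. Let $U=[k]\setminus V(M)$ be the unmatched vertices; by maximality of $M$, $U$ is independent in $R$, so every $u\in U$ has all of its (at least $\a k$) $R$-neighbors inside $V(M)$, giving $u$ many choices of matched pair to attach to. For each $u\in U$, choose a matched edge $jj'\in M$ with $u\sim j$ and merge $V_{u}$ into the $V_{j'}$ side of the pair. This preserves super-regularity because $(V_{u},V_{j})$ is dense and regular, so most vertices of $V_{u}$ are densely connected to $V_{j}$, which is the opposite side of the new pair. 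A load-balancing argument ensures that each matched pair absorbs only $O(1/\a)$ unmatched clusters. The exceptional set $V_{0}$ is then distributed vertex by vertex using a similar neighborhood argument: each $v\in V_{0}$ has many neighbors in some side $V_{i}^{\h}$, and we place it in $V_{i}^{\flip \h}$.

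Finally, upgrade each pair to $(\varepsilon,\delta)$-super-regularity (with $\delta=d_{0}/2$) by deleting from each side the small set of atypical vertices having too few neighbors on the opposite side; this removes at most $O(\varepsilon_{0}m)$ vertices per side by $\varepsilon_{0}$-regularity, and they can be redistributed in the same way. The resulting cluster sizes are all within a factor $O(1/\a)$ of each other, giving $\rho=\rho(\a)$; the number of clusters $q\le Q(\a,\varepsilon)$ is inherited from the regularity lemma. The main obstacle is managing the within-pair size balance when absorbing unmatched clusters: merging several $V_{u}$'s into one side of a pair can create significant asymmetry, and the argument must verify both that this asymmetry remains bounded by $\rho$ and that the merges preserve $\varepsilon_{0}$-regularity (which follows from the fact that regularity is inherited by subsets, and is approximately preserved under unions of pairs of comparable density). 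The flexibility of permitting $|V_{i}^{1}|\ne|V_{i}^{2}|$ up to a constant factor is what makes this plan feasible for all $\a>0$, not merely $\a>1/2$.
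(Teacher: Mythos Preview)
Your overall strategy coincides with the paper's: apply the regularity lemma, group the clusters of the reduced graph using its linear minimum degree, pass to super-regular sub-pairs, and then redistribute exceptional and atypical vertices one by one via their large degree into the opposite side of some pair. The difference is in how the grouping is done. The paper does not use a matching; instead it proves a short auxiliary lemma that any graph on $k$ vertices with minimum degree $\alpha'k$ has a spanning forest of vertex-disjoint stars, each with between $2$ and $1+1/\alpha'$ vertices (via a maximality argument). For each star the centre cluster becomes $V_i^1$ and the union of the leaf clusters becomes $V_i^2$; since every leaf is adjacent in $R$ to the centre, a ``union of regular pairs'' lemma makes $(V_i^1,V_i^2)$ dense with $\delta$ depending only on $\alpha$.

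Your matching-then-absorb route is close to this, but as written it has a gap. If an unmatched $u$ with $u\sim j$ is merged into the $j'$-side and another unmatched $u'$ with $u'\sim j'$ is merged into the $j$-side of the \emph{same} matched edge $jj'$, the resulting pair is $(V_j\cup V_{u'},\,V_{j'}\cup V_u)$. For $(\varepsilon,\delta)$-denseness you would then need control on $d(V_{u'},V_u)$; but $u,u'\in U$ are non-adjacent in $R$ (since $U$ is independent by maximality of $M$), so this density may be zero, and the union is not a ``union of pairs of comparable density'' as you assert. The easy fix is to designate, for each matched edge, one endpoint as the centre and absorb only unmatched clusters adjacent to that centre --- but once you do this you are constructing a bounded-size spanning star forest, and the paper's star-cover lemma delivers exactly that structure in one step, with an explicit $1+1/\alpha'$ size bound and no separate load-balancing argument needed.
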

We emphasize that in \ref{lem:partition-into-blobs} we do \emph{not
}guarantee that the sets $V_{i}^{\h}$ are of the same size, just
that the variation in the sizes of the clusters is bounded independently
of $\varepsilon$.

To prove \ref{lem:partition-into-blobs}, we will apply Szemer\'edi's
regularity lemma to obtain a reduced \emph{cluster graph}, then decompose
this cluster graph into small stars. In each star $T_{i}$, the center
cluster will give us $V_{i}^{1}$ and the leaf clusters will be combined
to form $V_{i}^{2}$. We will then have to redistribute some of the
vertices between the clusters to ensure super-regularity. Before giving
the details of the proof, we give a statement of (a version of) Szemer\'edi's
regularity lemma and some auxiliary lemmas for working with regularity
and super-regularity.
\begin{lem}[Szemer\'edi's regularity lemma, minimum degree form]
\label{lem:szemeredi-regularity}For every $\a>0$, and any $\varepsilon>0$
that is sufficiently small relative to $\a$, there are $\a'=\a'\left(\a\right)>0$
and $K=K\left(\varepsilon\right)$ such that the following holds.
For any graph $\G$ of minimum degree at least $\a\left|\G\right|$,
there is a partition of $V\left(\G\right)$ into clusters $V_{0},V_{1},\dots V_{k}$
($k\le K$), and a spanning subgraph $\G'$ of $\G$, satisfying the
following properties. The ``exceptional cluster'' $V_{0}$ has size
at most $\varepsilon n$, and the other clusters have equal size $sn$.
The minimum degree of $\G'$ is at least $\a'n$. There are no edges
of $\G'$ within the clusters, and each pair of non-exceptional clusters
is $\varepsilon$-regular in $\G'$ with density zero or at least
$\a'$. Moreover, define the \emph{cluster graph} $C$ as the graph
whose vertices are the $k$ non-exceptional clusters $V_{i}$, and
whose edges are the pairs of clusters between which there is nonzero
density in $\G'$. The minimum degree of $C$ is at least $\a'k$.
\end{lem}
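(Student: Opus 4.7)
The strategy is the standard cleanup derivation of the minimum-degree form from the basic Szemer\'edi regularity lemma (partitioning into $\varepsilon$-regular pairs, with no degree guarantee). The plan is to (1) apply the basic regularity lemma to $\G$, (2) clean up the resulting partition in several stages to control irregular pairs and per-vertex typicality, (3) define $\G'$ by deleting all ``bad'' edges, and (4) verify the claimed properties. For (1), I apply the usual Szemer\'edi regularity lemma with parameter $\varepsilon' \le \varepsilon^2/C$ (for some large constant $C$) and cluster-count lower bound $k_0 \ge 1/\varepsilon$, obtaining an equipartition $V_0^*,V_1^*,\ldots,V_k^*$ with $|V_0^*|\le\varepsilon' n$, each $|V_i^*|=L\le n/k$, $k$ bounded by a constant depending only on $\varepsilon'$, and at most $\varepsilon' k^2$ irregular pairs. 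I also fix the density threshold $d:=\alpha/4$.

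The cleanup in phase (2) has three stages. (i) Move into the exceptional set every ``bad'' cluster that participates in more than $\sqrt{\varepsilon'}k$ irregular pairs; there are at most $2\sqrt{\varepsilon'}k$ such clusters by an averaging argument. (ii) Within each surviving cluster $V_i^*$, move out every ``atypical'' vertex $v$ for which there are more than $\sqrt{\varepsilon'}k$ indices $j$ with $|e(v,V_j^*)-d(V_i^*,V_j^*)|V_j^*||>\varepsilon'|V_j^*|$; such vertices are rare by $\varepsilon'$-regularity (for each regular pair at most $\varepsilon'|V_i^*|$ vertices are atypical w.r.t.\ $V_j^*$) and Markov applied to double-counted (vertex, cluster) pairs. (iii) Trim the surviving clusters to a common size. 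Each stage contributes at most $O(\sqrt{\varepsilon'})n$ to the exceptional set, so for $C$ large enough we obtain $|V_0|\le\varepsilon n$.

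For (3) and (4), define $\G'$ by deleting from $\G$ all intra-cluster edges, all edges incident to $V_0$, all edges in irregular pairs of non-exceptional clusters, and all edges in pairs of density below $d$. For any $v$ in a surviving cluster $V_i$, I bound the four sources of degree loss: intra-cluster and $V_0$ losses are each $O(\varepsilon n)$; irregular-pair losses are $O(\sqrt{\varepsilon'}n)$ because $V_i$ is not bad; and low-density regular pairs contribute at most $(d+O(\sqrt{\varepsilon'}))n$, using stage (ii) to bound $v$'s degree in each such $V_j$ by $(d+\varepsilon')L$ (except at the at-most $\sqrt{\varepsilon'}k$ atypical $j$, each contributing $\le L$). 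For $\varepsilon,\varepsilon'$ small relative to $\alpha$ these sum to at most $\alpha n/2$, so $\deg_{\G'}(v)\ge\alpha' n$ with $\alpha':=\alpha/4$. Surviving inter-cluster pairs have unchanged edge sets between $\G$ and $\G'$ (we deleted only intra-cluster or $V_0$-incident edges), so they remain $\varepsilon'$-regular (hence $\varepsilon$-regular) with density $\ge d\ge\alpha'$; killed pairs have density $0$ and are trivially $\varepsilon$-regular. Finally, for any $v\in V_i$ all of its $\ge\alpha' n$ neighbors in $\G'$ lie in clusters $V_j$ with $\{V_i,V_j\}\in E(C)$, so $\deg_C(V_i)\ge\alpha' n/L\ge\alpha' k$.

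The main technical subtlety is stage (ii) of the cleanup. A naive approach that simply deletes all low-density edges, without first discarding atypical vertices, would fail because an individual vertex's edge-loss to regular low-density pairs can greatly exceed the pair-average bound $dn$, potentially wrecking the $\alpha n/2$ degree-loss budget; using $\varepsilon'$-regularity on a per-vertex rather than per-cluster basis (enabled by stage (ii)) is what controls this loss. Everything else is routine accounting plus the direct invocation of the basic regularity lemma.
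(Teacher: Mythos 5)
Your proof is essentially correct, but it is worth noting that the paper does not prove this lemma at all: it simply observes that the statement is the ``degree form'' of the regularity lemma as in Koml\'os--Simonovits \cite[Theorem~1.10]{KS96} together with the minimum-degree claim for the cluster graph, and cites \cite[Proposition~9]{KOT05} for the combined statement. What you have done is re-derive the degree form from the basic regularity lemma, which is a legitimate (and more self-contained) route. Your cleanup is slightly nonstandard: the usual derivation defines $\G'$ by deleting the bad edges, bounds the \emph{total} number of deleted edges by roughly $(d+\varepsilon/2)n^2/2$, and then moves to $V_0$ the few vertices that lost more than $(d+\varepsilon)n$ edges (a Markov argument \emph{after} the deletion), whereas you preprocess by discarding atypical vertices \emph{before} deleting edges. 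Both work, and your closing remark correctly identifies why some such step is unavoidable: regularity of a pair only controls the degrees of all but $\varepsilon'|V_i^*|$ vertices, so a fixed vertex can be atypical for many clusters simultaneously. Two small points of bookkeeping you gloss over: after stages (ii) and (iii) remove an $O(\sqrt{\varepsilon'})$ fraction of each cluster, the surviving pairs are $O(\sqrt{\varepsilon'})$-regular rather than $\varepsilon'$-regular (your choice $\varepsilon'\le\varepsilon^2/C$ absorbs this, but the parenthetical ``remain $\varepsilon'$-regular'' is not literally right), and the densities can likewise drift by $O(\sqrt{\varepsilon'})$, so you should either classify low-density pairs by their post-trim densities or take $\a'$ slightly below $d$ to retain the claimed lower bound of $\a'$ on surviving densities. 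Neither affects the substance of the argument.
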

This version of Szemer\'edi's regularity lemma is just the ``degree
form'' of \cite[Theorem~1.10]{KS96}, plus a straightforward claim
about minimum degree. Our statement follows directly from \cite[Proposition~9]{KOT05}.

We now give some simple lemmas about $\left(\varepsilon,\delta\right)$-denseness.
Note that $\left(\varepsilon,\delta\right)$-denseness is basically
a one-sided version of $\varepsilon$-regularity that is more convenient
in proofs about super-regularity. In particular, an $\varepsilon$-regular
pair with density $\delta$ is $\left(\varepsilon,\delta-\varepsilon\right)$-dense.
Here and in later sections, most of the theorems about $\left(\varepsilon,\delta\right)$-dense
pairs correspond to analogous theorems for $\varepsilon$-regular
pairs with density about $\delta$.
\begin{lem}
\label{lem:combine-clusters}Consider disjoint vertex sets $V^{1},V_{1}^{2},\dots,V_{r}^{2}$,
such that each $V_{i}^{2}$ is the same size and each $\left(V^{1},V_{i}^{2}\right)$
is $\left(\varepsilon,\delta\right)$-dense. Let $V^{2}=\bigcup_{i=1}^{r}V_{i}^{2}$;
then $\left(V^{1},V^{2}\right)$ is $\left(\varepsilon,\delta/r\right)$-dense.\end{lem}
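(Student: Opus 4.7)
The plan is a direct pigeonhole argument starting from the definition of $(\varepsilon,\delta)$-denseness. Fix arbitrary witnesses $U^{1}\subseteq V^{1}$ with $|U^{1}|\ge\varepsilon|V^{1}|$ and $U^{2}\subseteq V^{2}$ with $|U^{2}|\ge\varepsilon|V^{2}|$; I need to show that $d(U^{1},U^{2})\ge\delta/r$. Write $s=|V_{i}^{2}|$ (common across $i$ by hypothesis), so that $|V^{2}|=rs$, and for each $i$ set $U_{i}^{2}=U^{2}\cap V_{i}^{2}$.

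By pigeonhole there exists an index $i^{*}$ with $|U_{i^{*}}^{2}|\ge|U^{2}|/r$. The key observation is that the size threshold required by $(\varepsilon,\delta)$-denseness of the sub-pair $(V^{1},V_{i^{*}}^{2})$ on the second side is only $\varepsilon s$, and this is automatically met: since $|U^{2}|\ge\varepsilon rs$, we get $|U_{i^{*}}^{2}|\ge\varepsilon s=\varepsilon|V_{i^{*}}^{2}|$. Thus $(U^{1},U_{i^{*}}^{2})$ is an admissible pair for the denseness of $(V^{1},V_{i^{*}}^{2})$, so the number of edges satisfies
\[
e(U^{1},U_{i^{*}}^{2})\ge\delta|U^{1}||U_{i^{*}}^{2}|\ge\frac{\delta}{r}|U^{1}||U^{2}|.
\]
Since $U_{i^{*}}^{2}\subseteq U^{2}$, we have $e(U^{1},U^{2})\ge e(U^{1},U_{i^{*}}^{2})$, and dividing by $|U^{1}||U^{2}|$ gives $d(U^{1},U^{2})\ge\delta/r$ as required.

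There is really no obstacle to this lemma: it is a one-line pigeonhole after unwrapping the definitions. Notably, the proof does not need any edge-contribution estimate from the ``small'' intersections $U_{i}^{2}$ with $|U_{i}^{2}|<\varepsilon s$; a single good slice already supplies the entire edge count we need. The factor $1/r$ in the conclusion comes exactly from the averaging step that guarantees such a slice exists.
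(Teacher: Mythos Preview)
Your proof is correct and essentially identical to the paper's own argument: both pick (by pigeonhole) the cluster $V_{i}^{2}$ with the largest intersection with $U^{2}$, verify this intersection meets the $\varepsilon|V_{i}^{2}|$ threshold, and use the denseness of that single sub-pair to supply at least $(\delta/r)|U^{1}||U^{2}|$ edges. The paper's proof is just a terser rendering of exactly what you wrote.
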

\begin{proof}
Let $U^{\h}\subseteq V^{\h}$ with $\left|U^{\h}\right|\ge\varepsilon\left|V^{\h}\right|$.
Let $V_{i}^{2}$ be the cluster which has the largest intersection
with $U^{2}$, so we have $\left|U^{2}\cap V_{i}^{2}\right|\ge\varepsilon\left|V^{2}\right|/r=\varepsilon\left|V_{i}^{2}\right|$,
and there are therefore at least $\delta\left|U^{1}\right|\left|U^{2}\cap V_{i}^{2}\right|\ge\left(\delta/r\right)\left|U^{1}\right|\left|U^{2}\right|$
edges between $U^{1}$ and $U^{2}$.\end{proof}
\begin{lem}
\label{lem:robust-regular}Let $\left(V^{1},V^{2}\right)$ be an $\left(\varepsilon,\delta\right)$-super-regular
pair. Suppose we have $W^{\h}\supseteq V^{\h}$ with $\left|W^{\h}\right|\le\left(1+f\varepsilon\right)\left|V^{\h}\right|$,
and suppose that each vertex in $W^{\h}\backslash V^{\h}$ has at
least $\delta'\left|W^{\flip{\h}}\right|$ neighbours in $W^{\flip{\h}}$.
Then $\left(W^{1},W^{2}\right)$ is an $\left(\varepsilon',\delta'\right)$-super-regular
pair, where 
\[
\varepsilon'=\max\left\{ 2f,1+f\right\} \,\varepsilon,\quad\delta'=\min\left\{ 1/4,1/\left(1+f\varepsilon\right)\right\} \,\delta.
\]
\end{lem}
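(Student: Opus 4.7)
The plan is to separately verify the two components of the definition of super-regularity for the pair $(W^1, W^2)$: first the minimum degree lower bound on all vertices, and second the $(\varepsilon', \delta')$-density condition. The old parameters $(\varepsilon,\delta)$ control behaviour on the ``core'' sets $V^h$, and since $W^h$ exceeds $V^h$ by at most $f\varepsilon|V^h|$ vertices, we should be able to absorb the new vertices at the cost of slightly weakened parameters.

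For the minimum degree, the hypothesis supplies the bound directly for new vertices $v\in W^h\setminus V^h$. For an old vertex $v\in V^h$, super-regularity of $(V^1,V^2)$ gives at least $\delta|V^{\flip{h}}|$ neighbours in $V^{\flip{h}}\subseteq W^{\flip{h}}$. Because $|W^{\flip{h}}|\le(1+f\varepsilon)|V^{\flip{h}}|$, this translates to at least $\bigl(\delta/(1+f\varepsilon)\bigr)|W^{\flip{h}}|$ neighbours in $W^{\flip{h}}$, which is at least $\delta'|W^{\flip{h}}|$ by the choice of $\delta'$.

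For the density condition, take any $U^h\subseteq W^h$ with $|U^h|\ge\varepsilon'|W^h|$ and restrict to $U'^h=U^h\cap V^h$. Since $|W^h\setminus V^h|\le f\varepsilon|V^h|$, the set $U'^h$ is close to $U^h$. I would then make two quantitative observations. First, picking $\varepsilon'\ge(1+f)\varepsilon$ forces $|U'^h|\ge\varepsilon|V^h|$, so the $(\varepsilon,\delta)$-density of $(V^1,V^2)$ applies and produces at least $\delta|U'^1||U'^2|$ edges between $U'^1$ and $U'^2$. Second, picking $\varepsilon'\ge 2f\varepsilon$ forces $|U'^h|\ge|U^h|/2$, so the transition from $|U'^1||U'^2|$ to $|U^1||U^2|$ in the density expression costs at most a factor of $4$. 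Together, these yield $d(U^1,U^2)\ge\delta/4\ge\delta'$ by the choice of $\delta'$.

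The main obstacle --- such as it is --- is simply the parameter bookkeeping. The two independent constraints $\varepsilon'\ge(1+f)\varepsilon$ and $\varepsilon'\ge 2f\varepsilon$ explain the appearance of $\max\{1+f,2f\}\,\varepsilon$ in the conclusion, and the two independent density losses (a factor $1+f\varepsilon$ from the minimum-degree step and a factor $4$ from the density step) explain $\min\{1/4,1/(1+f\varepsilon)\}\,\delta$. There is no conceptual difficulty here; the entire argument is a careful accounting of how the discrepancy $|W^h|-|V^h|\le f\varepsilon|V^h|$ propagates through each of the super-regularity axioms.
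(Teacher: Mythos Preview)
Your proposal is correct and follows essentially the same approach as the paper's proof: both verify the degree condition by splitting into old and new vertices (using $|W^{\flip h}|\le(1+f\varepsilon)|V^{\flip h}|$ for the old ones), and both verify density by intersecting $U^h$ with $V^h$ and using the two bounds $|U^h\cap V^h|\ge\varepsilon|V^h|$ and $|U^h\cap V^h|\ge|U^h|/2$ coming from the two constraints on $\varepsilon'$. Your write-up is, if anything, more explicit than the paper's about how the two halves of the $\max$ and $\min$ arise.
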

\begin{proof}
Suppose $U^{\h}\subseteq W^{\h}$ with $\left|U^{\h}\right|\ge\varepsilon'\left|W^{\h}\right|$.
Then $\left|U^{\h}\cap V^{\h}\right|\ge\varepsilon'\left|W^{\h}\right|-f\varepsilon\left|V^{\h}\right|\ge\varepsilon\left|V^{\h}\right|$
so there are at least $\delta\left|U^{1}\cap V^{1}\right|\left|U^{2}\cap V^{2}\right|$
edges between $U^{1}$ and $U^{2}$. But note that $\left|U^{\h}\cap V^{\h}\right|\ge\left|U^{\h}\right|-f\varepsilon\left|W^{\h}\right|\ge\left(1-\left(f\varepsilon\right)/\varepsilon'\right)\left|U^{\h}\right|\ge\left(1/2\right)\left|U^{\h}\right|$,
so $d\left(U^{1},U^{2}\right)\ge\delta/4$ and $\left(W^{1},W^{2}\right)$
is $\left(\varepsilon',\delta'\right)$-dense.

Next, note that each $v\in V^{\h}$ has $\delta\left|V^{\flip{\h}}\right|\ge\delta'\left|W^{\flip{\h}}\right|$
neighbours in $W^{\flip{\h}}$, and by assumption each $v\in W^{\h}\backslash V^{\h}$
has $\delta'\left|W^{\flip{\h}}\right|$ neighbours in $W^{\flip{\h}}$,
proving that $\left(W^{1},W^{2}\right)$ is $\left(\varepsilon',\delta'\right)$-super-regular.\end{proof}
\begin{lem}
\label{lem:regular-contains-superregular}Every $\left(\varepsilon,\delta\right)$-dense
pair $\left(V^{1},V^{2}\right)$ contains a $\left(\varepsilon/\left(1-\varepsilon\right),\delta-\varepsilon\right)$-super-regular
sub-pair $\left(W^{1},W^{2}\right)$, where $\left|W^{\h}\right|\ge\left(1-\varepsilon\right)\left|V^{\h}\right|$.
\end{lem}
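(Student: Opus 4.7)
My plan is to construct $(W^1,W^2)$ by an iterative pruning. Starting from $V^1,V^2$, I repeatedly remove any vertex $v$ currently in some $W^\h$ whose degree into the current opposite side is less than $(\delta-\varepsilon)$ times the size of that opposite side, continuing until no such vertex remains. The termination condition immediately yields the minimum-degree half of $(\varepsilon/(1-\varepsilon),\delta-\varepsilon)$-super-regularity: every surviving $v\in W^\h$ has at least $(\delta-\varepsilon)|W^{\flip\h}|$ neighbors in $W^{\flip\h}$. What remains is the size bound $|W^\h|\ge(1-\varepsilon)|V^\h|$ and the density half.

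The density half is immediate: any $U^\h\subseteq W^\h$ with $|U^\h|\ge (\varepsilon/(1-\varepsilon))|W^\h|$ satisfies $|U^\h|\ge (\varepsilon/(1-\varepsilon))(1-\varepsilon)|V^\h|=\varepsilon|V^\h|$, so the original $(\varepsilon,\delta)$-density of $(V^1,V^2)$ gives $d(U^1,U^2)\ge\delta\ge\delta-\varepsilon$.

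The main obstacle is the size estimate, which I would handle by a minimal-counterexample argument. Suppose for contradiction that $|W^\h|<(1-\varepsilon)|V^\h|$ for some $\h$, and let $t$ be the first moment at which one side, say $V^\h$, has lost exactly $\varepsilon|V^\h|$ vertices. By minimality, the opposite side has lost strictly fewer than $\varepsilon|V^{\flip\h}|$ vertices throughout the process up to time $t$, so every intermediate $|W^{\flip\h}_s|$ (for $s\le t$) exceeds $(1-\varepsilon)|V^{\flip\h}|$. Let $B=V^\h\setminus W^\h_t$, so $|B|=\varepsilon|V^\h|$. Applying $(\varepsilon,\delta)$-density to $(B,V^{\flip\h})$ gives the lower bound $e(B,V^{\flip\h})\ge\delta|B||V^{\flip\h}|$. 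On the other hand, each $v\in B$ was removed at some time $s\le t$ with $\deg(v,W^{\flip\h}_s)<(\delta-\varepsilon)|W^{\flip\h}_s|$, so bounding the degree into $V^{\flip\h}\setminus W^{\flip\h}_s$ trivially by its size we get
\[
\deg(v,V^{\flip\h})<(\delta-\varepsilon)|W^{\flip\h}_s|+(|V^{\flip\h}|-|W^{\flip\h}_s|)=|V^{\flip\h}|-(1-\delta+\varepsilon)|W^{\flip\h}_s|.
\]
Substituting $|W^{\flip\h}_s|>(1-\varepsilon)|V^{\flip\h}|$ and simplifying yields a per-vertex upper bound of $|V^{\flip\h}|(\delta(1-\varepsilon)+\varepsilon^2)$. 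Summing over $v\in B$ gives $e(B,V^{\flip\h})<|B||V^{\flip\h}|(\delta(1-\varepsilon)+\varepsilon^2)$, which contradicts the density lower bound whenever $\varepsilon<\delta$ (the regime of interest). This forces $|W^\h|\ge(1-\varepsilon)|V^\h|$ on both sides and completes the proof.

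The only delicate step is the bookkeeping in the iterative removal — specifically, tracking the intermediate opposite-side sizes $|W^{\flip\h}_s|$ — which is what allows the minimum-degree condition to be stated relative to $|W^{\flip\h}|$ rather than the original $|V^{\flip\h}|$. A naive one-shot removal against the threshold $(\delta-\varepsilon)|V^{\flip\h}|$ gives only $(\delta-2\varepsilon)|W^{\flip\h}|$ minimum degree, so the iteration is essential for obtaining the sharper constant in the lemma.
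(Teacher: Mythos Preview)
Your iterative-pruning argument is correct (the caveat $\varepsilon<\delta$ is harmless, since otherwise the conclusion is vacuous). However, it is more elaborate than the route the paper has in mind: the paper does not write out a proof but defers to \cite[Proposition~6]{BST08}, which is the standard single-pass deletion. Namely, let $B^{h}\subseteq V^{h}$ be the set of vertices with fewer than $\delta\,|V^{\flip{h}}|$ neighbours in $V^{\flip{h}}$; applying $(\varepsilon,\delta)$-density to the pair $(B^{h},V^{\flip{h}})$ immediately forces $|B^{h}|<\varepsilon|V^{h}|$. Setting $W^{h}=V^{h}\setminus B^{h}$ then gives $|W^{h}|>(1-\varepsilon)|V^{h}|$, and each surviving $v\in W^{h}$ satisfies
\[
\deg(v,W^{\flip{h}})\ge\delta|V^{\flip{h}}|-|B^{\flip{h}}|>(\delta-\varepsilon)|V^{\flip{h}}|\ge(\delta-\varepsilon)|W^{\flip{h}}|,
\]
which is exactly the required minimum-degree condition. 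Your closing remark that a one-shot removal only yields $(\delta-2\varepsilon)|W^{\flip{h}}|$ stems from choosing the suboptimal threshold $(\delta-\varepsilon)|V^{\flip{h}}|$ rather than $\delta|V^{\flip{h}}|$; with the latter threshold no iteration is needed and the bookkeeping of the intermediate sizes $|W^{\flip{h}}_{s}|$ disappears entirely.
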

\ref{lem:regular-contains-superregular} follows easily from the same
proof as \cite[Proposition~6]{BST08}.

Now we prove \ref{lem:partition-into-blobs}. First we need a lemma
about a decomposition into small stars.
\begin{lem}
\label{lem:star-cover}Let $G$ be an $n$-vertex graph with minimum
degree at least $\a n$. Then there is a spanning subgraph $S$ which
is a union of vertex-disjoint stars, each with at least two and at
most $1+1/\a$ vertices.\end{lem}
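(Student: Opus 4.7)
The plan is to build the star forest starting from a maximal matching and then absorbing the leftover vertices one at a time via a simple local rearrangement. Fix any maximal matching $M$ of $G$ and let $I = V(G) \setminus V(M)$. By maximality of $M$, the set $I$ is an independent set, so every $v \in I$ has all of its (at least $\alpha n$) neighbours in $V(M)$. We use the edges of $M$ as the initial star forest (each edge being a size-$2$ star whose center is not yet designated) and process the vertices of $I$ in an arbitrary order, maintaining throughout the invariant that every star has between $2$ and $1+1/\alpha$ vertices.

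When it is time to insert $v \in I$, we look for a neighbour $w$ of $v$ in the current forest that enables one of the following three moves: (a) $w$ is the center of a non-full star $T$ (meaning $|T| < 1+1/\alpha$), and we attach $v$ as a new leaf of $T$; (b) $w$ is a leaf of a star $T$ of size at least $3$ with center $c$, and we split $T$ into $T \setminus \{w\}$ (still a valid star) and the new size-$2$ star $\{w, v\}$; or (c) $w$ is a leaf of a size-$2$ star $\{c, w\}$, in which case we designate $w$ as the new center and form the size-$3$ star $\{c, w, v\}$, using the matching edge $cw$ and the new edge $wv$.

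The main step is to verify that at least one of these three moves is always available. If none of them is, then every neighbour of $v$ must be the center of a full star, that is, a star of size exactly $\lfloor 1+1/\alpha \rfloor$. But $v$ has at least $\alpha n$ neighbours, each lying in a distinct star (centers belong to unique stars), so these pairwise disjoint stars together contain at least $\alpha n \cdot \lfloor 1+1/\alpha \rfloor > n$ vertices (using $\lfloor 1/\alpha \rfloor > 1/\alpha - 1$), which is a contradiction. The only delicate point is that move (c) produces a size-$3$ star, which respects the bound $1+1/\alpha$ precisely when $\alpha \le 1/2$; for the easier regime $\alpha > 1/2$ a perfect matching exists by Dirac's theorem and yields the conclusion directly.
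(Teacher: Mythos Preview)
Your proof is correct and uses essentially the same approach as the paper: the three moves (a), (b), (c) are exactly the cases in the paper's argument, and the final counting contradiction is identical. The only cosmetic difference is that the paper packages the argument extremally (take a star forest with the maximum number of vertices and show it must be spanning), whereas you build the forest iteratively from a maximal matching; the underlying mechanism is the same.
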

\begin{proof}
Let $S$ be a union of such stars with the maximum number of vertices.
Suppose there is a vertex $v$ uncovered by $S$. If $v$ has a neighbour
which is a center of one of the stars in $S$, and that star has fewer
than $1+\floor{1/\a}$ vertices, then we could add $v$ to that star,
contradicting maximality. (Here we allow either vertex of a 2-vertex
star to be considered the ``center''). Otherwise, if $v$ has a
neighbour $w$ which is a leaf of one of the stars in $S$, then we
could remove that leaf from its star and create a new 2-vertex star
with edge $vw$, again contradicting maximality. The remaining case
is where each of the (at least $\a n$) neighbours of $v$ is a center
of a star with $1+\floor{1/\a}>1/\a$ vertices. But these stars would
comprise more than $n$ vertices, which is again a contradiction.
We conclude that $S$ covers $G$, as desired.
\end{proof}

\begin{proof}[Proof of \ref{lem:partition-into-blobs}]
Apply our minimum degree form of Szemer\'edi's regularity lemma,
with some $\varepsilon'$ to be determined (which we will repeatedly
assume is sufficiently small). Consider a cover of the cluster graph
$C$ by stars $T_{1},\dots,T_{q}$ of size at most $1+1/\a'$, as
guaranteed by \ref{lem:star-cover}. Let $W_{i}^{1}$ be the center
cluster of $T_{i}$, and let $W_{i}^{2}$ be the union of the leaf
clusters of $T_{i}$ (for two-vertex stars, arbitrarily choose one
vertex as the ``leaf'' and one as the ``center''). Each edge of
the cluster graph $C$ corresponds to an $\left(\varepsilon',\a'/2\right)$-dense
pair, and each $W_{i}^{\h}$ is the union of at most $1/\a'$ clusters
$V_{i}$. By \ref{lem:combine-clusters}, with $\delta'=\left(\a'\right)^{2}/2$
each pair $\left(W_{i}^{1},W_{i}^{2}\right)$ is $\left(\varepsilon',\delta'\right)$-dense
in $\G'$ (therefore $\G$).

Apply \ref{lem:regular-contains-superregular} to obtain sets $V_{i}^{\h}\subseteq W_{i}^{\h}$
such that $\left|V_{i}^{\h}\right|=\left(1-\varepsilon'\right)\left|W_{i}^{\h}\right|$
and each $\left(V_{i}^{1},V_{i}^{2}\right)$ is $\left(2\varepsilon',\delta''\right)$-super-regular,
for $\delta''=\delta'/2$. Combining the exceptional cluster $V_{0}$
and all the $W_{i}^{\h}\backslash V_{i}^{\h}$, there are at most
$2\varepsilon'n$ ``bad'' vertices which are not part of a super-regular
pair.

Each bad vertex $v$ has at least $\a'n-2\varepsilon'n$ neighbours
to the clusters $V_{i}^{\h}$. The clusters $V_{i}^{\h}$ which contain
fewer than $\delta''\left|V_{i}^{\h}\right|$ of these neighbours,
altogether contain at most $\delta''n$ such neighbours. Each $V_{i}^{\h}$
has size at most $sn/\a'$, so for small $\varepsilon'$ there are
at least
\[
\frac{\a'n-2\varepsilon'n-\delta''n}{sn/\a'}\ge\left(\a'\right)^{2}/\left(2s\right)
\]
clusters $V_{i}^{\h}$ which have at least $\delta''\left|V_{i}^{\h}\right|$
neighbours of $v$. Pick one such $V_{i}^{\h}$ uniformly at random,
and put $v$ in $V_{i}^{\flip{\h}}$ (do this independently for each
bad $v$). By a concentration inequality, after this procedure a.a.s.
at most $2\left(2\varepsilon'n\right)/\left(\left(\a'\right)^{2}/\left(2s\right)\right)\le\left(8\varepsilon'/\left(\a'\right)^{2}\right)\left|W_{i}^{\h}\right|\le\left(9\varepsilon'/\left(\a'\right)^{2}\right)\left|V_{i}^{\h}\right|$
bad vertices have been added to each $V_{i}^{\h}$. By \ref{lem:robust-regular},
for small $\varepsilon'$ each $\left(V_{i}^{1},V_{i}^{2}\right)$
is now $\left(O\left(\varepsilon'\right),\delta''/4\right)$-super-regular.
To conclude, let $\delta=\delta''/4$ and $\rho=2\a'$, and choose
some $\varepsilon'$ small relative to $\varepsilon$.
\end{proof}
\global\long\def\bi{\mathbf{i}}

\global\long\def\bj{\mathbf{j}}

\global\long\def\br{\mathbf{r}}

We apply \ref{lem:partition-into-blobs} to our graph $\G$, with
$\varepsilon_{1}=\varepsilon_{1}\left(\delta\right)$ to be determined.
For $\bi=\left(i,\h\right)$, let $V_{\bi}=V_{i}^{\h}$, and let $\left|V_{\bi}\right|=s_{\bi}n$.

\subsubsection{Embedding a subforest of $\protect\T$\label{sub:embed-forest}}

In this section we will start to embed $\T$ into $\G\cup\R$. Just
as in \ref{sub:many-leaves}, we will use the decomposition of $\R$
into ``phases'' $\R_{1},\R_{2},\R_{3},\R_{4}$ (but we start with
$\R_{2}$ for consistency with the section numbering).

Recall that a \emph{bare path} in $\T$ is a path $P$ such that every
vertex of $P$ has degree exactly two in $\T$. Proceeding with the
proof outline, we need the fact that $\T$ is almost entirely composed
of bare paths, as is guaranteed by the following lemma due to one
of the authors (\cite[Lemma~2.1]{Kri10}).
\begin{lem}
\label{lem:bare-paths-if-no-leaves}Let $\T$ be a tree on $n$ vertices
with at most $\ell$ leaves. Then $\T$ contains a collection of at
least $\left(n-\left(2\ell-2\right)\left(k+1\right)\right)/\left(k+1\right)$
vertex-disjoint bare paths of length $k$ each.
\end{lem}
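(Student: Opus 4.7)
The plan is to prove the lemma by bounding the number of ``irregular'' vertices of $\T$ (those of degree $\ne 2$), which forces $\T$ to contain a small number of long ``bare segments'' out of which many disjoint length-$k$ bare paths can be greedily extracted.

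First I would bound the size of the set of \emph{bad} vertices $B \subseteq V(\T)$, namely the vertices of degree different from $2$. The set of leaves contributes at most $\ell$, and from the handshake identity $\sum_{v} \deg v = 2(n-1)$ together with $\sum_{v} 1 = n$ one obtains
\[
\sum_{v:\,\deg v \ge 3} (\deg v - 2) \;=\; (\text{number of leaves}) - 2 \;\le\; \ell - 2,
\]
so in particular the number of vertices of degree at least $3$ is also at most $\ell - 2$. Consequently $|B| \le 2\ell - 2$.

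Next I would examine the ``topological'' structure $T^{*}$ obtained by suppressing every degree-$2$ vertex of $\T$ (iteratively replacing a degree-$2$ vertex with neighbours $u,w$ by the edge $uw$). Since $\T$ is a tree, $T^{*}$ is still a tree, and its vertex set is exactly $B$, so it has $|B| - 1 \le 2\ell - 3$ edges. These edges are in bijection with the connected components of $\T[V(\T) \setminus B]$, each of which is a path whose vertices all have degree $2$ in $\T$ (i.e., a bare path in the sense of the lemma). Call these the \emph{maximal bare segments}, and write $m_i$ for the number of vertices of the $i$-th such segment, so that $\sum_i m_i = n - |B| \ge n - (2\ell - 2)$.

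Finally I would pack: inside a maximal bare segment on $m_i$ vertices, greedily carve out $\lfloor m_i/(k+1)\rfloor$ vertex-disjoint sub-paths on $k+1$ vertices each (these are bare paths of length $k$ in $\T$). Summing over the at most $2\ell - 2$ segments, and using that each segment wastes at most $k$ vertices, the total number of extracted length-$k$ bare paths is at least
\[
\sum_{i} \left\lfloor \frac{m_i}{k+1} \right\rfloor \;\ge\; \frac{(n - (2\ell - 2)) - (2\ell - 2)k}{k+1} \;=\; \frac{n - (2\ell - 2)(k+1)}{k+1},
\]
which is exactly the claimed bound, and vertex-disjointness is automatic. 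There is no substantive obstacle here; the only care required is the bookkeeping that links the handshake bound $|B| \le 2\ell - 2$ with the bound on the number of maximal bare segments supplied by $T^{*}$.
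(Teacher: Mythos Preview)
Your argument is correct. The paper itself does not prove this lemma; it quotes it verbatim from \cite{Kri10} (Lemma~2.1 there), and your proof is essentially the same as Krivelevich's original: bound the number of vertices of degree $\ne 2$ by $2\ell-2$ via the handshake identity, observe that deleting them leaves at most $2\ell-2$ bare segments covering at least $n-(2\ell-2)$ vertices, and greedily extract $\lfloor m_i/(k+1)\rfloor$ disjoint length-$k$ bare paths from each segment.

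One small wording issue: the edges of $T^{*}$ are not literally in bijection with the components of $T[V(T)\setminus B]$, since two adjacent vertices of $B$ give an edge of $T^{*}$ with no corresponding (nonempty) component. But this only means the number of nonempty segments is \emph{at most} $|B|-1\le 2\ell-3\le 2\ell-2$, which is all you use, so the computation is unaffected.
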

\global\long\def\r#1{r\left(#1\right)}

\global\long\def\x{x}

\global\long\def\y{y}

In our case $\ell=\lambda n$. If we choose $k$ large enough and
choose $\lambda$ small enough relative to $k$, then $\T$ contains
a collection of $n/\left(2\left(k-1\right)\right)$ disjoint bare
$k$-paths. (The definite value of $k$ will be determined later;
it will be odd and depend on $\a$ but not $\varepsilon$). If we
delete the interior vertices of these paths, then we are left with
a forest $\F$ on $n/2$ vertices.

Now, embed $\F$ into the random graph $\R_{2}$ (also, with some
abuse of notation, denote this embedded subgraph by $\F$). There
are $n/\left(2\left(k-1\right)\right)$ ``special pairs'' of ``special
vertices'' of $\F\subseteq\R_{2}$ that need to be connected with
$k$-paths of non-special vertices. We call such connecting paths
``special paths''. Let $X\subseteq V$ be the set of special vertices
and let $W=V\backslash\F$ be the set of ``free'' vertices. By symmetry
we can assume 
\[
X\,\cup\,\F\backslash X\,\cup\,W
\]
is a uniformly random partition of $V$ into parts of sizes $n/\left(k-1\right)$,
$n/2-n/\left(k-1\right)$ and $n/2$ respectively. We can also assume
that the special pairs correspond to a uniformly random partition
of $X$ into pairs. Note that $2\left|W\right|=\left(k-1\right)\left|X\right|$.

Let $X_{\bi}=V_{\bi}\cap X$ and $W_{\bi}=V_{\bi}\backslash F$ be
the set of free vertices remaining in $V_{\bi}$. By \ref{lem:hypergeometric-concentration}
and the union bound, a.a.s. each $\left|W_{\bi}\right|\sim s_{\bi}n/2$
and $\left|X_{\bi}\right|\sim s_{\bi}n/\left(k-1\right)$.

Here and in future parts of the proof, it is critical that after we
take certain subsets of our super-regular pairs, we maintain super-regularity
or at least density (albeit with weaker $\varepsilon$ and $\delta$).
We will ensure this in each situation by appealing to one of the following
two lemmas.
\begin{lem}
\label{lem:large-preserves-dense}Suppose $\left(V^{1},V^{2}\right)$
is $\left(\varepsilon,\delta\right)$-dense, and let $W^{\h}\subseteq V^{\h}$
with $\left|W^{\h}\right|\ge\gamma\left|V^{\h}\right|$. Then $\left(W^{1},W^{2}\right)$
is $\left(\varepsilon/\gamma,\delta\right)$-dense.\end{lem}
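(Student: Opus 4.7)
The plan is to verify the definition of $(\varepsilon/\gamma, \delta)$-denseness for $(W^1, W^2)$ directly, using the hypothesis on $(V^1, V^2)$. This is a routine inheritance statement, essentially the same as the classical "slicing lemma" for $\varepsilon$-regular pairs, adapted to the one-sided density notion that the authors prefer.

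Concretely, I would take arbitrary subsets $U^h \subseteq W^h$ with $|U^h| \ge (\varepsilon/\gamma) |W^h|$, and then observe that since $|W^h| \ge \gamma |V^h|$, we have
\[
|U^h| \;\ge\; (\varepsilon/\gamma)\,|W^h| \;\ge\; (\varepsilon/\gamma)\cdot \gamma |V^h| \;=\; \varepsilon |V^h|.
\]
So $U^1, U^2$ are admissible test sets in the $(\varepsilon, \delta)$-denseness condition for the original pair $(V^1, V^2)$, which immediately gives $d(U^1, U^2) \ge \delta$. Since $U^1 \subseteq W^1$ and $U^2 \subseteq W^2$ were arbitrary, this is exactly what is needed to conclude that $(W^1, W^2)$ is $(\varepsilon/\gamma, \delta)$-dense.

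There is no real obstacle here; the only thing to be mildly careful about is that the density parameter $\delta$ is preserved, while only the regularity parameter $\varepsilon$ degrades by the factor $1/\gamma$. This is consistent with the intuition that taking a large subset cannot suddenly make the pair sparser on average, only that the set of admissible test subsets (those comprising at least a $\varepsilon/\gamma$ fraction of $W^h$) is a subfamily of the admissible test subsets for $V^h$, so no new constraint is introduced.
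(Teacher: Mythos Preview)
Your proof is correct and follows exactly the same approach as the paper: take test sets $U^h\subseteq W^h$ with $|U^h|\ge(\varepsilon/\gamma)|W^h|$, use $|W^h|\ge\gamma|V^h|$ to get $|U^h|\ge\varepsilon|V^h|$, and conclude $d(U^1,U^2)\ge\delta$ from the $(\varepsilon,\delta)$-denseness of $(V^1,V^2)$. The paper's proof is the same one-line verification.
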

\begin{proof}
If $U^{\h}\subseteq W^{\h}$ with $\left|U^{\h}\right|\ge\left(\varepsilon/\gamma\right)\left|W^{\h}\right|$,
then $\left|U^{\h}\right|\ge\varepsilon\left|V^{\h}\right|$. The
result follows from the definition of $\left(\varepsilon,\delta\right)$-denseness.\end{proof}
\begin{lem}
\label{lem:random-preserves-superregular}Fix $\delta,\varepsilon'>0$.
There is $\varepsilon=\varepsilon\left(\varepsilon'\right)>0$ such
that the following holds. Suppose $\left(V^{1},V^{2}\right)$ is $\left(\varepsilon,\delta\right)$-dense,
let $n_{\h}$ satisfy $\left|V^{\h}\right|\ge n_{\h}=\Omega$$\left(n\right)$
(not necessarily uniformly over $\varepsilon$ and $\delta$) and
for each $\h$ let $W^{\h}\subseteq V^{\h}$ be a uniformly random
subset of size $n_{\h}$. Then $\left(W^{1},W^{2}\right)$ is a.a.s.
$\left(\varepsilon',\delta/2\right)$-dense. If moreover $\left(V^{1},V^{2}\right)$
was $\left(\varepsilon,\delta\right)$-super-regular then $\left(W^{1},W^{2}\right)$
is a.a.s. $\left(\varepsilon',\delta/2\right)$-super-regular.
\end{lem}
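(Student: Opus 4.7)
The plan is to handle the denseness and minimum-degree parts of the conclusion separately. The denseness conclusion will come essentially deterministically from \ref{lem:large-preserves-dense}, while the minimum-degree condition needed for the super-regular case requires a concentration argument with a straightforward union bound.

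For the denseness claim, observe that $|W^{\h}|=n_{\h}\ge cn\ge c|V^{\h}|$, where $c>0$ is the implicit constant in $n_{\h}=\Omega(n)$ (which, by the hypothesis, is allowed to depend on $\varepsilon$ and $\delta$). Applying \ref{lem:large-preserves-dense} with $\gamma=c$ immediately gives that $(W^{1},W^{2})$ is $(\varepsilon/c,\delta)$-dense. If $\varepsilon=\varepsilon(\varepsilon')$ is chosen small enough that $\varepsilon/c\le\varepsilon'$---equivalently, demanding $c\ge\varepsilon/\varepsilon'$ in the hypothesis---then $(W^{1},W^{2})$ is $(\varepsilon',\delta)$-dense, which is in fact stronger than the claimed $(\varepsilon',\delta/2)$-denseness. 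No probabilistic argument is needed for this part.

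For the minimum-degree part of super-regularity, fix $v\in V^{\h}$. By the super-regularity of $(V^{1},V^{2})$, $v$ has at least $\delta|V^{\flip{\h}}|$ neighbours in $V^{\flip{\h}}$. Since $W^{\flip{\h}}$ is a uniformly random $n_{\flip{\h}}$-subset of $V^{\flip{\h}}$, the count $|N_{V^{\flip{\h}}}(v)\cap W^{\flip{\h}}|$ is hypergeometrically distributed with mean at least $\delta|V^{\flip{\h}}|\cdot n_{\flip{\h}}/|V^{\flip{\h}}|=\delta n_{\flip{\h}}=\delta|W^{\flip{\h}}|$. Applying \ref{lem:hypergeometric-concentration} with deviation parameter $1/2$, this count is at least $(\delta/2)|W^{\flip{\h}}|$ with probability $1-\exp(-\Omega(n))$, since $\delta n_{\flip{\h}}=\Omega(n)$. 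A union bound over the at most $2n$ choices of $(v,\h)$ shows that a.a.s.~every $v\in V^{\h}$, and hence every $v\in W^{\h}\subseteq V^{\h}$, has at least $(\delta/2)|W^{\flip{\h}}|$ neighbours in $W^{\flip{\h}}$, completing the super-regularity claim.

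There is really no significant obstacle; the only subtlety is keeping the order of quantifiers straight---$\varepsilon$ must be fixed as a function of $\varepsilon'$ alone, and only afterwards is the implicit constant $c$ in $n_{\h}=\Omega(n)$ allowed to depend on $\varepsilon$ and $\delta$, which is exactly what the parenthetical ``not necessarily uniformly over $\varepsilon$ and $\delta$'' in the hypothesis affords.
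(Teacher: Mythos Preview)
Your argument for the degree condition in the super-regular case is correct and matches the paper's proof exactly.

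The denseness argument, however, has a genuine gap hiding in the quantifier order you yourself flag at the end. You want to apply \ref{lem:large-preserves-dense} with $\gamma=c$ to get $(\varepsilon/c,\delta)$-denseness, and then force $\varepsilon/c\le\varepsilon'$. But the lemma requires $\varepsilon$ to be chosen as a function of $\varepsilon'$ \emph{alone}, and only afterwards is the implicit constant $c$ in $n_{\h}=\Omega(n)$ fixed, possibly depending on $\varepsilon$ (and $\delta$). So $c$ could be, say, $\varepsilon^{100}$, making $\varepsilon/c$ enormous. Your phrase ``equivalently, demanding $c\ge\varepsilon/\varepsilon'$ in the hypothesis'' is exactly what you are \emph{not} allowed to do: the lemma must work for whatever $c>0$ the user hands you after $\varepsilon$ has been frozen. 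The deterministic large-subset lemma simply cannot deliver regularity parameter $\varepsilon'$ when the subset is a $c$-fraction with $c\ll\varepsilon$.

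This is precisely why the paper's proof is not elementary here: it invokes the Gerke--Kohayakawa--R\"odl--Steger result \cite{GKRS07}, which shows (with exponentially small failure probability) that a \emph{random} subset of an $(\varepsilon,\delta)$-dense pair inherits $(\varepsilon',\delta/2)$-denseness even when the subset is far smaller than an $\varepsilon$-fraction of the original. The randomness is essential for the denseness conclusion, not just for the degree condition. Indeed, the paper remarks after \ref{claim:find-templates} that avoiding \cite{GKRS07} would require weakening \ref{lem:random-preserves-superregular} and reorganising the argument so that the clusters are never split into so many pieces.
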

\ref{lem:random-preserves-superregular} is a simple consequence of
a highly nontrivial (and much more general) result proved by Gerke,
Kohayakawa, R\"odl and Steger in \cite{GKRS07}, which shows that
small random subsets inherit certain regularity-like properties with
very high probability. The notion of ``lower regularity'' in that
paper essentially corresponds to our notion of $\left(\varepsilon,\delta\right)$-density.
\begin{proof}[Proof of \ref{lem:random-preserves-superregular}]
The fact that $\left(W^{1},W^{2}\right)$ is a.a.s. $\left(\varepsilon',\delta/2\right)$-dense
for suitable $\varepsilon$, follows from two applications of \cite[Theorem~3.6]{GKRS07}.

It remains to consider the degree condition for super-regularity.
For each $v\in W^{\h}$, the number of neighbours of $v$ in $W^{\flip{\h}}$
is hypergeometrically distributed, with expected value at least $\delta n_{\flip{\h}}$.
Since $n_{\flip h}=\Omega\left(n\right)$, \ref{lem:hypergeometric-concentration} and the union
bound immediately tells us that a.a.s. each such $v$ has $\delta n_{\flip h}/2$
such neighbours, as required.
\end{proof}
Let $\flip{\left(i,\h\right)}=\left(i,\flip{\h}\right)$, and let
$\left[\left(i,\h\right)\right]=i$. It is an immediate consequence
of \ref{lem:random-preserves-superregular} that each $\left(X_{\bi},W_{\flip{\bi}}\right)$
and $\left(W_{\bi},W_{\flip{\bi}}\right)$ are $\left(\varepsilon_{2},\delta_{2}\right)$-super-regular,
where $\delta_{2}=\delta_{1}/2$ and $\varepsilon_{2}$ can be made
arbitrarily small by choice of $\varepsilon_{1}$.

Now that we have embedded $\F$, we no longer care about the vertices
used to embed $\F\backslash X$; they will never be used again. It
is convenient to imagine, for the duration of the proof, that instead
of embedding parts of $\T$, we are ``removing'' vertices from $\G\cup\R$,
gradually making the remaining graph easier to deal with. So, update
$n$ to be the number of vertices not used to embed $\F\backslash X$
(previously $\left(1/2+1/\left(k-1\right)\right)n$), and update $s_{\bi}$
to satisfy $\left|W_{\bi}\right|=s_{\bi}n$. Note that (asymptotically
speaking) this just rescales the $s_{\bi}$ so that they sum to $\left(\left(k-1\right)/\left(k+1\right)\right)$.
Therefore the variation between the $s_{\bi}$ is still bounded independently
of $\varepsilon$; we can increase $\rho$ slightly so that each $\left|s_{\bi}\right|/\left|s_{\bj}\right|\le\rho$.
We then have $\left|X_{\bi}\right|\sim2s_{\bi}n/\left(k-1\right)$
for each $\bi$, and there are $n/\left(k+1\right)$ special pairs.

\subsubsection{\label{sub:fix-endpoints}Fixing the endpoints of the special paths}

We will eventually need to start finding and removing special paths
between our special pairs. For this, we would like each of the special
pairs to be ``between'' partner clusters $V_{\bi},V_{\flip{\bi}}$,
so that we can take advantage of the super-regularity in $\G$. Therefore,
for every special pair $\left\{ \x,\y\right\} $, we will find very
short (length-2) paths from $\x$ to some vertex $\x'\in W_{\br}$
and from $\y$ to some $\y'\in W_{\flip{\br}}$, for some $\br$.
All these short paths will be disjoint. Our short paths effectively
``move'' the special pair $\left\{ \x,\y\right\} $ to $\left\{ \x',\y'\right\} $:
to complete the embedding we now need to find length-$\left(k-4\right)$
special paths connecting the new special pairs.

Note that there is a.a.s. an edge between any two large vertex sets
in the random graph $\R_{3}$, as formalized below.
\begin{lem}
\label{lem:edge-between-large-sets}For any $\varepsilon>0$, there
is $\c=\c\left(\varepsilon\right)$ such that the following holds.
In a random graph $\R\in\GG\left(n,\c/n\right)$, there is a.a.s.
an edge between any two (not necessarily disjoint) vertex subsets
$A,B$ of size $\varepsilon n$.
\end{lem}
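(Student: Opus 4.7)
The plan is to prove this via a standard first-moment / union-bound argument, with a small twist to handle the fact that $A$ and $B$ need not be disjoint. First I would fix a pair $A,B \subseteq V$ with $|A|=|B|=\varepsilon n$ and lower-bound the number of potential edges in $\R$ whose presence would give an edge between $A$ and $B$.

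The twist is as follows. If $|B\setminus A|\ge \varepsilon n/2$, then $A$ and $B\setminus A$ are disjoint with product of sizes at least $\varepsilon^2 n^2/2$, and every edge of $\R$ between them witnesses an edge between $A$ and $B$. Otherwise $|A\cap B|\ge \varepsilon n/2$, and I would split $A\cap B$ arbitrarily into two disjoint halves $C_1,C_2$ of size at least $\varepsilon n/4$; every edge of $\R$ between $C_1$ and $C_2$ lies inside $A\cap B$ and hence between $A$ and $B$. In either case we have identified $\Omega(\varepsilon^2 n^2)$ potential edges (with an absolute implied constant), each present in $\R$ independently with probability $\c/n$.

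Hence the probability that $\R$ contains no edge between $A$ and $B$ is at most $(1-\c/n)^{\Omega(\varepsilon^2 n^2)} \le \exp(-\Omega(\c\varepsilon^2 n))$. The number of pairs of $\varepsilon n$-subsets of $V$ is at most $\binom{n}{\varepsilon n}^2 \le 4^n$, so by a union bound the probability that some pair $A,B$ of size $\varepsilon n$ has no edge between them in $\R$ is at most $4^n \exp(-\Omega(\c\varepsilon^2 n))$. Choosing $\c=\c(\varepsilon)$ large enough makes this $o(1)$, which is what we want; the same conclusion then holds for all subsets of size \emph{at least} $\varepsilon n$ by monotonicity.

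There is no real obstacle here: the only point requiring any care is the overlap case, since the statement explicitly allows $A\cap B\ne\emptyset$, but as shown above this is dealt with by a trivial case split. (Alternatively one could simply invoke standard estimates for the number of edges of $\GG(n,\c/n)$ inside/between vertex subsets of linear size, but the direct computation above is short enough to write out.)
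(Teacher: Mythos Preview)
Your proof is correct and follows essentially the same approach as the paper: reduce to a pair of disjoint linear-size subsets, bound the probability of no edge between them by $\exp(-\Omega(c\varepsilon^2 n))$, and union-bound over the at most $4^n$ pairs $(A,B)$. The only cosmetic difference is that the paper handles the overlap in one stroke by picking any $A'\subseteq A$ of size $\varepsilon n/2$ and then $B'\subseteq B\setminus A'$ (which automatically has size at least $\varepsilon n/2$), thereby avoiding your case split.
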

\ref{lem:edge-between-large-sets} is a standard result and can be
proved in basically the same way as \ref{lem:bipartite-perturbed-stars}.
We include a proof for completeness.
\begin{proof}
For any such subsets $A,B$, choose disjoint $A'\subseteq A$ and
$B'\subseteq B$ of size $\varepsilon n/2$. The probability there
is no edge between $A$ and $B$ is less than the probability there
is no edge between $A'$ and $B'$, which is $\left(1-\c/n\right)^{\left(\varepsilon n/2\right)^{2}}\le e^{-\c\varepsilon^{2}n/2}$.
There are at most $2^{2n}$ choices of $A$ and $B$ so for large
$\c$, by the union bound there is a.a.s. an edge between any such
pair of sets.
\end{proof}
It is relatively straightforward to greedily find suitable short paths
in $\G\cup\R_{3}$ using the minimum degree condition in the super-regular
pairs $\left(X_{\bi},W_{\flip{\bi}}\right)$ and \ref{lem:edge-between-large-sets}.
However we need to be careful to find and remove our paths in such
a way that afterwards we still have super-regularity between certain
subsets. We will accomplish this by setting aside a random subset
of each $W_{\br}$ from which the $\x'$, $\y'$ (and no other vertices
in our length-2 paths) will be chosen, then using the symmetry argument
of \ref{rem:symmetry} and appealing to \ref{lem:random-preserves-superregular}.
The details are a bit involved, as follows.

Uniformly at random partition each $W_{\bi}$ into sets $W_{\bi}^{\left(1\right)}$
and $W_{\bi}^{\left(2\right)}$ of size $\left|W_{\bi}\right|/2$.
We will take the $\x'$,$\y'$ from the $W_{\bi}^{\left(2\right)}$s
and we will take the intermediate vertices in our length-2 paths from
the $W_{\bi}^{\left(1\right)}$s. Note that in $\G$, a.a.s. each
$\x\in X_{\bi}$ has at least $\left(\delta_{2}/3\right)s_{\flip{\bi}}n$
neighbours in $W_{\bi}^{\left(1\right)}$. This follows from \ref{lem:hypergeometric-concentration} applied to the
hypergeometric random variables $\left|N_{\G}\left(\x\right)\cap W_{\flip{\bi}}^{\left(1\right)}\right|$
(which have mean at least $\delta_{2}s_{\flip{\bi}}n/2$ by super-regularity),
plus the union bound. Arbitrarily choose an ordering of the special
pairs, and choose some ``destination'' index $\br$ for each special
pair, in such a way that each $\br$ is chosen for a $\left(1+o\left(1\right)\right)/\left(2q\right)$
fraction of the special pairs. (Recall that $q$ is the number of
pairs of partner clusters). We will find our length-2 paths greedily,
by repeatedly applying the following argument.

Suppose we have already found length-2 paths for the first $i-1$
special pairs, and let $S$ be the set of vertices in these paths.
Let $\left\{ \x,\y\right\} $ be the $i$th special pair, with $\x\in X_{\bi}$
and $\y\in X_{\bj}$. Let $\br$ be the destination index for this
special pair. We claim that if $k$ is large then $\left|N_{\G}\left(\x\right)\cap\left(W_{\flip{\bi}}^{\left(1\right)}\backslash S\right)\right|=\Omega\left(n\right)$
and $\left|W_{\br}^{\left(2\right)}\backslash S\right|=\Omega\left(n\right)$.
It will follow from \ref{lem:edge-between-large-sets} that there
is a.a.s. an edge between $N_{\G}\left(\x\right)\cap\left(W_{\flip{\bi}}^{\left(1\right)}\backslash S\right)$
and $W_{\br}^{\left(2\right)}\backslash S$ in $\R_{3}$. This gives
a length-2 path between $\x$ and some $\x'\in W_{\br}^{\left(2\right)}$
disjoint to the paths so far, in $\G\cup\R_{3}$. By identical reasoning
there is a disjoint length-2 path between $\y$ and some $\y'\in W_{\flip{\br}}^{\left(2\right)}$
passing through $W_{\flip{\bj}}^{\left(1\right)}$.

To prove the claims in the preceding argument, note that we always
have $\left|W_{\flip{\bi}}^{\left(1\right)}\cap S\right|\le\left|X_{\bi}\right|$
and $\left|W_{\br}^{\left(2\right)}\cap S\right|\le\left(1+o\left(1\right)\right)\left|X\right|/\left(2q\right)$.
Let $s=\left(\left(k-1\right)/\left(k+1\right)\right)/2q$ be the
average $s_{\bi}$, and recall that $\rho$ controls the relative
sizes of the $s_{\bi}$. For large $k$,
\[
\left|N_{\G}\left(\x\right)\cap\left(W_{\flip{\bi}}^{\left(1\right)}\backslash S\right)\right|\ge\frac{\delta_{2}s_{\flip{\bi}}}{3}n-\left|X_{\bi}\right|\sim\frac{\delta_{2}s_{\flip{\bi}}}{3}n-\frac{2s_{\bi}}{k-1}n\ge\left(\frac{\delta_{2}}{3}-\frac{2\rho}{k-1}\right)s_{\flip{\bi}}n=\Omega\left(n\right),
\]
and
\[
\left|W_{\br}^{\left(2\right)}\backslash S\right|\ge\frac{\left|W_{\bi}\right|}{2}-\left(1+o\left(1\right)\right)\frac{\left|X\right|}{2q}\sim\frac{s_{\bi}}{2}n-\frac{2s}{k-1}n\ge\left(\frac{1}{2}-\frac{2\rho}{k-1}\right)s_{\bi}n=\Omega\left(n\right),
\]
as required. We emphasize that since $\delta_{2}$ and $\rho$ are
independent of $\varepsilon$ (depending only on $\alpha$), we can
also choose $k$ independent of $\varepsilon$.

After we have found our length-2 paths, let $X_{\bi}'\subseteq W_{\bi}$
be the set of ``new special vertices'' $\x',\y'$ in $W_{\bi}$.
Note that we can in fact assume that each $X_{\bi}'$ is a uniformly
random subset of $W_{\bi}^{\left(2\right)}$ of its size (which is
$\left|X\right|/\left(2q\right)$). This can be seen by a symmetry
argument, as per \ref{rem:symmetry}. Because here the situation is
a bit complicated, we include some details. Condition on the random
sets $W_{\bi}^{\left(2\right)}$. For each $\bi$, the vertices in
$X_{\bi}'$ are each chosen so that they are adjacent in $\R_{3}$
to some vertex in one of the $W_{\bj}^{\left(1\right)}$. Note that
the distribution of $\R_{3}$ is invariant under permutations of $W_{\bi}^{\left(2\right)}$.
We can choose such a vertex permutation $\pi$ uniformly at random;
then $\pi\left(X_{\bi}'\right)$ is a uniformly random subset of $W_{\br}^{\left(2\right)}$
of its size, which has the right adjacencies in $\pi\left(\R_{3}\right)$
(which has the same distribution as $\R_{3}$).

Now we claim that even after removing the vertices of our length-2
paths we a.a.s. maintain some important properties of the clusters.
Let $W_{\bi}'\subseteq W_{\bi}\backslash X_{\bi}'$ be the subset
of $W_{\bi}$ that remains after the vertices in the length-2 paths
are deleted. 
\begin{claim}
\label{claim:move-ok}Each $\left(X_{\bi}',W_{\flip{\bi}}'\right)$
and $\left(W_{\bi}',W_{\flip{\bi}}'\right)$ are a.a.s. $\left(\rho\left(k-1\right)\varepsilon_{2}/\left(1-\delta_{2}/2\right),\delta_{2}/2\right)$-super-regular,
satisfying 
\[
\left(2\rho\right)^{-1}\le\frac{k-1}{2}\cdot\frac{\left|X_{\bi}'\right|}{\left|W_{\bi}'\right|}\le2\rho.
\]
\end{claim}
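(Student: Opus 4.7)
The plan is to verify separately the size-ratio condition and the super-regularity of each of the two pairs. The unifying technical fact is that very few vertices are removed from each cluster: the number of intermediates of the length-2 paths lying in $W_{\bi}$ equals $|X_{\flip{\bi}}|\sim2s_{\flip{\bi}}n/(k-1)=O(n/k)$, and $|X_{\bi}'|=(1+o(1))n/(2q(k+1))=O(n/k)$. Setting $Z_{\bi}=W_{\bi}\setminus W_{\bi}'$, an elementary computation (using $s_{\bi}\ge s/\rho$ with $s$ the average $s_{\bi}$) gives $|Z_{\bi}|/|W_{\bi}|=O(\rho^{2}/k)$; since $\rho$, $q$, $\delta_{2}$ are independent of $\varepsilon$, we can fix $k$ large enough that $|Z_{\bi}|\le(\delta_{2}/2)|W_{\bi}|$ for every $\bi$. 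The size ratio is then immediate from $s=(k-1)/(2q(k+1))$ and $s/\rho\le s_{\bi}\le\rho s$:
\[
\frac{k-1}{2}\cdot\frac{|X_{\bi}'|}{|W_{\bi}'|}\sim\frac{(k-1)/(4q(k+1))}{s_{\bi}}=\frac{s/2}{s_{\bi}}\in\left[\frac{1}{2\rho},\frac{\rho}{2}\right]\subseteq\left[(2\rho)^{-1},2\rho\right].
\]

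For super-regularity of $(W_{\bi}',W_{\flip{\bi}}')$, both ratios $|W_{\bi}'|/|W_{\bi}|$ and $|W_{\flip{\bi}}'|/|W_{\flip{\bi}}|$ are at least $1-\delta_{2}/2$, so \ref{lem:large-preserves-dense} applied to the $(\varepsilon_{2},\delta_{2})$-dense pair $(W_{\bi},W_{\flip{\bi}})$ yields $(\varepsilon_{2}/(1-\delta_{2}/2)^{2},\delta_{2})$-density, well within the claimed bound. For the minimum-degree condition, any $v\in W_{\bi}'$ has at least $\delta_{2}|W_{\flip{\bi}}|$ neighbors in $W_{\flip{\bi}}$ by the original super-regularity, and after removing at most $(\delta_{2}/2)|W_{\flip{\bi}}|$ vertices this leaves at least $(\delta_{2}/2)|W_{\flip{\bi}}|\ge(\delta_{2}/2)|W_{\flip{\bi}}'|$ neighbors in $W_{\flip{\bi}}'$; the other direction is identical.

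The pair $(X_{\bi}',W_{\flip{\bi}}')$ is the main obstacle, because $X_{\bi}'$ is much smaller than $W_{\bi}$ and is produced by the randomized greedy process. For density, apply \ref{lem:large-preserves-dense} twice: first shrink $W_{\flip{\bi}}$ to $W_{\flip{\bi}}'$ with $\gamma=1-\delta_{2}/2$, then shrink $W_{\bi}$ to $X_{\bi}'$ with $\gamma=|X_{\bi}'|/|W_{\bi}|\ge1/(\rho(k-1))$; this gives exactly the claimed $(\rho(k-1)\varepsilon_{2}/(1-\delta_{2}/2),\delta_{2})$-density. The minimum-degree condition from the $X_{\bi}'$ side follows from the same argument as above, since each $v\in X_{\bi}'\subseteq W_{\bi}$ retains at least $(\delta_{2}/2)|W_{\flip{\bi}}'|$ neighbors. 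The genuinely delicate part is the minimum-degree condition from the $W_{\flip{\bi}}'$ side, where we need at least $(\delta_{2}/2)|X_{\bi}'|$ neighbors in $X_{\bi}'$ for each $v\in W_{\flip{\bi}}'$, even though $|X_{\bi}'|=\Theta(n/(qk))$. Here I would invoke the per-cluster symmetry argument given immediately before the claim to assume $X_{\bi}'$ is a uniformly random $|X|/(2q)$-subset of $W_{\bi}^{(2)}$; composing with the (already uniform) random halving, $X_{\bi}'$ is then a uniformly random subset of $W_{\bi}$. Each $v\in W_{\flip{\bi}}$ has at least $\delta_{2}|W_{\bi}|$ neighbors in $W_{\bi}$, so $|N_{\G}(v)\cap X_{\bi}'|$ is hypergeometric with mean at least $\delta_{2}|X_{\bi}'|=\Omega(n)$; by \ref{lem:hypergeometric-concentration} and a union bound over the $n$ vertices $v\in V$, a.a.s. every such $v$ (in particular every $v\in W_{\flip{\bi}}'$) has at least $(\delta_{2}/2)|X_{\bi}'|$ neighbors in $X_{\bi}'$, completing the verification.
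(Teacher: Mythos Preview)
Your argument is correct and matches the paper's: both use \ref{lem:large-preserves-dense} together with the bound $|W_{\bi}\setminus W_{\bi}'|\le(\delta_2/2)|W_{\bi}|$ for the density and the deterministic degree conditions, and both handle the delicate degree condition from $W_{\flip{\bi}}'$ into the small set $X_{\bi}'$ via the symmetry argument (treating $X_{\bi}'$ as a uniformly random subset of $W_{\bi}$) plus hypergeometric concentration. Note a harmless factor-of-two slip: $|X_{\bi}'|\sim|X|/(2q)=n/(q(k+1))$ rather than $n/(2q(k+1))$, so the size ratio is asymptotically $s/s_{\bi}\in[\rho^{-1},\rho]$, still well within the claimed range.
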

\begin{proof}
Since each $W_{\bi}^{\left(2\right)}$ is a uniformly random subset
of $W_{\bi}$, we can assume each $X_{\bi}'$ is a uniformly random
subset of $W_{\bi}$ of its size. So, each $\left|N_{\G}\left(w\right)\cap X_{\bi}'\right|$
is hypergeometrically distributed with mean at least $\delta_{2}\left|X_{\bi}'\right|$.
By \ref{lem:hypergeometric-concentration}
and the union bound, in $\G$ a.a.s. each $w\in W_{\flip{\bi}}$ has
at least $\left(\delta_{2}/2\right)\left|X_{\bi}'\right|$ neighbours
in $X_{\bi}'$. Next, note that each $\left|X_{\bi}'\right|\sim2sn/\left(k-1\right)$.
Since $s/s_{\flip{\bi}}\ge1/\rho$, we have $\rho\left(k-1\right)\left|X_{\bi}'\right|\ge\left|W_{\flip{\bi}}\right|$
and by \ref{lem:large-preserves-dense} each $\left(X_{\bi}',W_{\flip{\bi}}\right)$
is $\left(\rho\left(k-1\right)\varepsilon_{2},\delta_{2}\right)$-dense.
It follows that each $\left(X_{\bi}',W_{\flip{\bi}}\right)$ is $\left(\rho\left(k-1\right)\varepsilon_{2},\delta_{2}/2\right)$-super-regular.

Now, for large $k$ note that $\left|W_{\bi}\backslash W_{\bi}'\right|\le\left(\delta_{2}/2\right)\left|W_{\bi}\right|$.
Indeed, recalling the choice of vertices for the length-2 paths, note
that 
\[
\frac{\left|W_{\bi}\backslash W_{\bi}'\right|}{\left|W_{\bi}\right|}=\frac{\left|X_{\flip{\bi}}\right|+\left|X\right|/2q}{\left|W_{\bi}\right|}\sim\frac{\frac{2}{k-1}\left(s_{\flip{\bi}}+s\right)}{s_{\bi}}\le\frac{4\rho}{k-1},
\]
which is smaller than $\delta_{2}/2$ for large $k$. For $v\in W_{\bi}$
we have $\left|N_{\G}\left(v\right)\cap W_{\flip{\bi}}\right|\ge\delta_{2}\left|W_{\flip{\bi}}\right|$
by super-regularity, so 
\[
\left|N_{\G}\left(v\right)\cap W_{\flip{\bi}}'\right|\ge\left|N_{\G}\left(v\right)\cap W_{\flip{\bi}}\right|-\left|W_{\bi}\backslash W_{\bi}'\right|\ge\left(\delta_{2}/2\right)\left|W_{\flip{\bi}}\right|\ge\left(\delta_{2}/2\right)\left|W_{\flip{\bi}}'\right|.
\]
Combining this with \ref{lem:large-preserves-dense}, a.a.s. each
$\left(W_{\bi}',W_{\flip{\bi}}'\right)$ is $\left(\varepsilon_{2}/\left(1-\delta_{2}/2\right),\delta_{2}/2\right)$-super-regular
and each $\left(X_{\bi}',W_{\flip{\bi}}'\right)$ is $\left(\rho\left(k-1\right)\varepsilon_{2}/\left(1-\delta_{2}/2\right),\delta_{2}/2\right)$-super-regular.

Finally, the claims about the relative sizes of the $W_{\bi}'$, $X_{\bi}'$
for large $k$ are simple consequences of the asymptotic expressions
\[
\left|W_{\bi}'\right|\sim s_{\bi}n-\frac{2}{k-1}\left(s_{\flip{\bi}}+s\right)n,\quad\left|X_{\bi}'\right|\sim\frac{2s}{k-1}n.\qedhere
\]

\end{proof}
Now we can redefine each $X_{\bi}$ to be the set of new special vertices
taken from $W_{\bi}$, and we can update the $W_{\bi}$ by removing
all the vertices used in our length-2 paths (that is, set $X_{\bi}=X_{\bi}'$
and $W_{\bi}=W_{\bi}'$). All the special pairs are between some $X_{\bi}$
and $X_{\flip{\bi}}$; let $Z_{\left[\bi\right]}=Z_{\left[\flip{\bi}\right]}$
be the set of such pairs (recall that $\left[\left(i,\h\right)\right]=i$).
Update the sets $X=\bigcup_{\bi}X_{\bi}$ and $W=\bigcup_{\bi}W_{\bi}$,
and let $Z=\bigcup_{i}Z_{i}$ be the set of all special pairs, so
$\left|Z\right|=\left|X\right|/2$.

We also update all the other variables as before: update $n$ to be
the number of vertices in $X\cup W$ (previously $n-4n/\left(k+1\right)$),
and redefine the $s_{\bi}$ to still satisfy $\left|W_{\bi}\right|=s_{\bi}n$.
Also, update $k$ to be the new required length of special paths (previously
$k-4$), so we still have $2\left|W\right|=\left(k-1\right)\left|X\right|$.

Finally, let $\varepsilon_{3}=\rho\left(k-1\right)\varepsilon_{2}/\left(1-\delta_{2}/2\right)$
and $\delta_{3}=\delta_{2}/2$ and double the value of $\rho$. By
\ref{claim:move-ok}, each $\left(X_{\bi},W_{\flip{\bi}}\right)$
and $\left(W_{\bi},W_{\flip{\bi}}\right)$ are then $\left(\varepsilon_{3},\delta_{3}\right)$-super-regular,
and each 
\[
\rho^{-1}\le\frac{k-1}{2}\cdot\frac{\left|X_{\bi}\right|}{\left|W_{\bi}\right|}\le\rho.
\]

\subsubsection{Adjusting the relative sizes of the clusters\label{sub:cluster-adjustment}}

The next step is to use the random edges in $\R_{4}$ to start removing
special paths. Our objective is to do this in such a way that after
we have removed the vertices in those paths from the $X_{\bi}$ and
$W_{\bi}$,  we will have $2\left|W_{\bi}\right|=\left(k-1\right)\left|X_{\bi}\right|$
for each $\bi$. We require this to apply the blow-up lemma in the
final step.

The way we will remove special paths is with \ref{lem:adjusting-paths},
to follow.

A ``special sequence'' is a sequence of vertices $\x,w_{1},\dots,w_{k-1},\y$,
such that $\left\{ \x,\y\right\} $ is a special pair and each $w_{j}$
is in some $W_{\bi}$. For any special path we might find, the sequence
of vertices in that path will be special. In fact, the special sequences
are precisely the allowed sequences of vertices in special paths.

We also define an equivalence relation on special sequences: two special
sequences $\x,w_{1},\dots,w_{k-1},\y$ and and $\x',w_{1}',\dots,w_{k-1}',\y'$
are equivalent if $\x$ and $\x'$ are in the same cluster $X_{\bi}$
(therefore $\y,\y'\in X_{\flip{\bi}}$), and if $w_{j}$ and $w_{j}'$
are in the same cluster $W_{\bi_{j}}$ for all $j$. A ``template''
is an equivalence class of special sequences, or equivalently a sequence
of clusters $X_{\bi},W_{\bi_{1}},\dots,W_{\bi_{k-1}},X_{\flip{\bi}}$
(repetitions permitted).

The idea is that we can specify (a multiset of) desired templates,
and provided certain conditions are satisfied we can find special
paths with these templates, using the remaining vertices in $\G\cup\R_{4}$.
Say a special sequence $\x,w_{1},\dots,w_{k-1},\y$ with $\x\in X_{\bi}$
and $\y\in X_{\flip{\bi}}$ is ``good'' if $w_{1}\in W_{\flip{\bi}}$
and $w_{k-1}\in W_{\bi}$. We make similar definitions for good special
paths and good templates. It is easier to find good special paths
because we can take advantage of super-regularity at the beginning
and end of the paths.

Recall that $\R_{4}\in\GG\left(n,\c_{4}/n\right)$; we still have
the freedom to choose large $\c_{4}$ to make it possible to find
our desired special paths in $\G\cup\R_{4}$. Recall from previous
sections that $s$ and $\rho$ control the sizes of the clusters,
that $Q$ controls the number of clusters, and that each $\left(X_{\bi},W_{\flip{\bi}}\right)$
and $\left(W_{\bi},W_{\flip{\bi}}\right)$ are $\left(\varepsilon_{3},\delta_{3}\right)$-super-regular,
where we can make $\varepsilon_{3}$ as small as we like relative
to $\alpha$.
\begin{claim}
\label{lem:adjusting-paths}For any $\gamma>0$, if $\varepsilon_{3}$
is sufficiently small relative to $\gamma$ and $\delta_{3}$, there
is $\c_{4}=\c_{4}\left(s,\rho,\delta_{3},\gamma,Q,k\right)$ such
that the following holds.

Suppose we specify a multiset of templates (each of which we interpret
as sequences of clusters), in such a way that each $X_{\bi}$ (respectively,
each $W_{\bi}$) appears in these templates at most $\left(1-\gamma\right)\left|X_{\bi}\right|$
times (respectively, at most $\left(1-\gamma\right)\left|W_{\bi}\right|$
times). Then, we can a.a.s. find special paths in $\G\cup\R_{4}$
with our desired templates in such a way that after the removal of
their vertices, each $\left(X_{\bi},W_{\flip{\bi}}\right)$ and $\left(W_{\bi},W_{\flip{\bi}}\right)$
are still $\left(\varepsilon_{3}/\gamma,\delta_{3}/4\right)$-super-regular.
\end{claim}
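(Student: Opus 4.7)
The plan is a preallocation-plus-greedy strategy. Before building any paths, I preselect for each cluster $W_{\bi}$ a uniformly random subset $R_{\bi}\subseteq W_{\bi}$ of size $|W_{\bi}|-t_{\bi}-\eta|W_{\bi}|$, where $t_{\bi}\le(1-\gamma)|W_{\bi}|$ is the number of template slots corresponding to $W_{\bi}$ and $\eta=\gamma/2$ is a small slack parameter. The paths will then be built using only vertices from the working pool $U_{\bi}=W_{\bi}\setminus R_{\bi}$, whose size $t_{\bi}+\eta|W_{\bi}|$ exceeds the actual requirement $t_{\bi}$ by a linear slack $\eta|W_{\bi}|$. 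By \ref{lem:random-preserves-superregular}, a.a.s.\ each of $(R_{\bi},R_{\flip{\bi}})$, $(U_{\bi},U_{\flip{\bi}})$, and $(X_{\bi},R_{\flip{\bi}})$ is $(\varepsilon',\delta_{3}/2)$-super-regular for any $\varepsilon'$ controlled by our choice of $\varepsilon_{3}$. The point of this decoupling is that the uniformly random $R_{\bi}$ will automatically sit inside the final remainder, so \ref{lem:random-preserves-superregular} will deliver most of the required super-regularity essentially for free.

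Next, process the templates in arbitrary order. For a template $X_{\bi},W_{\bi_{1}},\dots,W_{\bi_{k-1}},X_{\flip{\bi}}$ with special pair $\{\x,\y\}$, I build the path by choosing $w_{1},\dots,w_{k-1}$ sequentially: at step $j$, pick $w_{j}$ uniformly at random among the vertices of $U_{\bi_{j}}$ that are still unused and joined to $w_{j-1}$ by a $\G$-edge when $\bi_{j}=\flip{\bi_{j-1}}$, or by a $\R_{4}$-edge otherwise; at $j=k-1$ additionally require $w_{k-1}$ to be joined to $\y$ by an edge of the appropriate type. To verify that the candidate set has size $\Omega(n)$ at every step, I would combine three ingredients: (i) super-regularity of $(U_{\bi_{j-1}},U_{\bi_{j}})$ in the partner case, which gives $w_{j-1}$ at least $(\delta_{3}/2)|U_{\bi_{j}}|$ $\G$-neighbors in $U_{\bi_{j}}$; (ii) standard binomial concentration on $|N_{\R_{4}}(w_{j-1})\cap U_{\bi_{j}}|$ in the non-partner case, giving $\Omega(\c_{4})$ random neighbors for $\c_{4}$ large (cf.\ \ref{lem:edge-between-large-sets}); and (iii) a Chernoff-type analysis of the random greedy process showing that the used set so far contains only a proportional fraction of $w_{j-1}$'s candidate neighbors, so that at least $\Omega(\eta|W_{\bi_{j}}|)$ candidates remain available. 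The two simultaneous adjacency constraints at $j=k-1$ each cut the candidate set by at most a bounded factor, so it still has linear size.

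After all paths are built, let $U^{*}_{\bi}\subseteq U_{\bi}$ be the $t_{\bi}$ vertices actually used; the final remaining set $W'_{\bi}=R_{\bi}\cup(U_{\bi}\setminus U^{*}_{\bi})$ then contains $(R_{\bi},R_{\flip{\bi}})$ as a sub-pair which is already $(\varepsilon_{3}/\gamma,\delta_{3}/2)$-super-regular by Step 1. To extend super-regularity to the full $(W'_{\bi},W'_{\flip{\bi}})$, and similarly to $(X'_{\bi},W'_{\flip{\bi}})$, I would invoke \ref{lem:robust-regular}, adding back the $\eta|W_{\bi}|$ excess unused pool vertices $U_{\bi}\setminus U^{*}_{\bi}$. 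The hypothesis of \ref{lem:robust-regular} requires each added $v$ to retain $\Omega(|W'_{\flip{\bi}}|)$ $\G$-neighbors inside $W'_{\flip{\bi}}$, and this verification is the main technical hurdle: the distribution of $U^{*}_{\flip{\bi}}$ is not literally uniform because the greedy's choices depend on the structure of $\G$, but since $\G$-degrees inside a super-regular pair differ by at most a factor of $1/\delta_{3}$, combining symmetry of $\R_{4}$ (via \ref{rem:symmetry}) with a Chernoff-type martingale argument along the greedy process delivers the required concentration on each $v$'s surviving neighbors. Taking $\varepsilon_{3}$ sufficiently small relative to $\gamma$ and $\delta_{3}$ throughout yields the claimed $(\varepsilon_{3}/\gamma,\delta_{3}/4)$-super-regularity.
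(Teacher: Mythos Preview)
Your preallocation of random reserve sets $R_{\bi}$ is close in spirit to the paper's leftover parts $W_{\bi}'$, and using \ref{lem:random-preserves-superregular} on these to secure super-regularity of the remainder is the right idea. The gap is in the path-building step, specifically the non-partner case of your vertex-by-vertex greedy.

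When $\bi_{j}\ne\flip{\bi_{j-1}}$ you need an $R_{4}$-edge from the already-fixed vertex $w_{j-1}$ into the pool $U_{\bi_{j}}$. But $R_{4}\in\GG(n,c_{4}/n)$ with $c_{4}$ a constant (the claim explicitly has $c_{4}=c_{4}(s,\rho,\delta_{3},\gamma,Q,k)$ independent of $n$), so $\left|N_{R_{4}}(w_{j-1})\cap U_{\bi_{j}}\right|$ has expectation $\Theta(c_{4})=O(1)$, and $\Pr\bigl(N_{R_{4}}(w_{j-1})\cap U_{\bi_{j}}=\varnothing\bigr)\approx e^{-\Theta(c_{4})}$ is a positive constant. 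Your reference to \ref{lem:edge-between-large-sets} does not apply: that lemma requires both sets to have linear size, whereas here one ``set'' is the singleton $\{w_{j-1}\}$. The templates produced in \ref{claim:find-templates} can contain several consecutive non-partner clusters, and you must process $\Theta(n)$ template instances with no failures permitted; hence your greedy a.a.s.\ hits a dead end. No martingale or concentration argument can save this, because the obstruction is structural: individual vertices in $\GG(n,c/n)$ have only $O(1)$ neighbours. (A secondary issue: at $j=k-1$ the two $G$-neighbourhoods $N_{G}(w_{k-2})$ and $N_{G}(y)$ inside $U_{\bi_{k-1}}$ each have size $\Theta(\delta_{3}|U_{\bi_{k-1}}|)$ but their intersection need not be of linear size without a lookahead restriction at step $k-2$; ``each cuts by a bounded factor'' is not justified as stated.)

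The paper's remedy is to abandon vertex-by-vertex greediness and process each template in a single batch. \ref{lem:adjusting-paths-more-basic} shows that between any two linear-size subsets of consecutive clusters there is a.a.s.\ a near-perfect $R_{4}$-matching, and the union of the $k-2$ internal matchings then contains $(1-\xi)n$ vertex-disjoint paths simultaneously; the first and last edges come from $G$ via denseness of $(X,W_{1})$ and $(W_{k-1},Y)$. The full claim follows by randomly splitting each cluster into one slightly inflated piece $W_{\bi}^{\tau}$ per template plus a leftover $W_{\bi}'$ (exactly your $R_{\bi}$ idea), and applying \ref{cor:adjusting-paths-basic} once per template. The essential point you are missing is that sparse random edges must be exploited globally, through matchings between large sets, not locally through the neighbourhood of a single current vertex.
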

Note that our condition on the templates says precisely that at least
a $\gamma$-fraction of each $X_{\bi}$ and $W_{\bi}$ should remain
after deleting the vertices of the special paths.

We will prove \ref{lem:adjusting-paths} with a simpler lemma.
\begin{lem}
\label{lem:adjusting-paths-more-basic}For any $\delta,\xi>0$ and
any integer $k>2$, there are $\varepsilon\left(\delta,\xi\right)>0$
and $\c=\c\left(\delta,\xi,k\right)$ such that the following holds.

Let $\G$ be a graph on the vertex set $\range{\left(k+1\right)n}$,
together with a vertex partition into disjoint clusters $X,W_{1}\dots,W_{k-1},Y$,
such that $\left|X\right|=\left|Y\right|=n$ and each $\left|W_{i}\right|=n$.
Suppose that $\left(X,W_{1}\right)$ and $\left(W_{k-1},Y\right)$
are $\left(\varepsilon,\delta\right)$-dense and that each $\x\in X$
is bijectively paired with a vertex $\y\in Y$, comprising a \emph{special
pair} $\left(\x,\y\right)$. Let $\R\in\GG\left(\left(k+1\right)n,\c/n\right)$.
Then, in $\G\cup\R$ we can a.a.s. find $\left(1-\xi\right)n$ vertex-disjoint
paths running through $X,W_{1},\dots,W_{k-1},Y$ in that order, each
of which connects the vertices of a special pair (these are \emph{special
paths}).\end{lem}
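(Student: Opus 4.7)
The plan is to process the special pairs in a uniformly random order, greedily finding a path for each pair. For each pair $(x,y)$ the path will use a $\G$-edge from $x$ to some $w_1\in W_1$, then traverse the intermediate clusters $W_2,\dots,W_{k-2}$ via $\R$-edges, and finish with a $\G$-edge from some $w_{k-1}\in W_{k-1}$ to $y$. This exploits the density hypothesis at the two endpoints and relies on the flexibility of random paths in $\R$ through the middle.

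Applying $(\varepsilon,\delta)$-denseness of $(X,W_1)$ to the set of $x\in X$ with fewer than $\delta n/2$ $\G$-neighbours in $W_1$ shows that there are at most $\varepsilon n$ such ``bad'' $x$'s, and symmetrically for $Y$. Choosing $\varepsilon<\xi/4$ and discarding the at most $2\varepsilon n$ pairs with a bad endpoint leaves at least $(1-\xi/2)n$ ``good'' pairs to process. At the $m$-th step, for the current pair $(x,y)$, we maintain used-vertex sets $S_i\subseteq W_i$ and seek $w_1\in N_{\G}(x)\cap(W_1\setminus S_1)$, $w_i\in W_i\setminus S_i$ for $2\le i\le k-2$, and $w_{k-1}\in N_{\G}(y)\cap(W_{k-1}\setminus S_{k-1})$, joined by $\R$-edges $w_iw_{i+1}$; stopping at $m=(1-\xi/2)n$ produces the target $(1-\xi)n$ paths. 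For such $m$, each $|W_i\setminus S_i|\ge(\xi/2)n$, and the random ordering combined with hypergeometric-style concentration (in the spirit of \ref{lem:hypergeometric-concentration}) guarantees that $|N_{\G}(x)\cap(W_1\setminus S_1)|$ and $|N_{\G}(y)\cap(W_{k-1}\setminus S_{k-1})|$ are $\Omega(n)$ with high probability. With all four candidate sets then linear in $n$, the expected number of admissible $\R$-paths between them is $\Omega(\c^{k-2}n)$, and Janson's inequality applied to the $k-2$ random-edge events of each candidate path gives that at least one such path exists with probability $1-\exp(-\Omega(n))$---comfortably enough for a union bound over the $n$ steps.

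The main technical obstacle is rigorously controlling the distribution of $S_i$, which is not literally a uniformly random subset of $W_i$ because the greedy may systematically favour certain candidate $w_1,\dots,w_{k-1}$ based on $\R$. The cleanest workaround is a two-round exposure of $\R$: split $\R$ into two independent copies $\R'$ and $\R''$, each of density $\c/(2n)$, use $\R'$ together with \ref{lem:edge-between-large-sets} to certify the deterministic structural property that between any two linear-sized subsets of consecutive clusters there is always an $\R'$-path through the middle (uniformly over all possible $S_i$), and use the untouched independence of $\R''$---conditional on the current $S_i$---to actually produce each step's path. Alternatively, a Doob-martingale argument over the random permutation of pairs, combined with Azuma's inequality and the observation that swapping adjacent pairs changes $|S_i\cap N_{\G}(x)\cap W_i|$ by only $O(1)$, establishes the required concentration around the mean $m|N_{\G}(x)\cap W_i|/|W_i|$, enabling us to conclude that the candidate sets remain $\Omega(n)$ throughout the process.
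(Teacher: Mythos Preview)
Your overall strategy---greedily building paths with a $G$-edge at each end and an $R$-path through the middle clusters---is exactly the paper's, as is the use of the ``edge between any two linear sets'' property of $R$. The decisive difference, and the real gap, lies in \emph{which} special pair is handled at each step. You fix the processing order in advance and then need, for the specific pair $(x,y)$ reached at step $m$, that $N_G(x)\cap(W_1\setminus S_1)$ is still of linear size. But $S_1$ is produced by the greedy from the structure of $G$ and the realised $R$, not sampled uniformly from $W_1$; nothing in your setup prevents the greedy from consuming all of $N_G(x)\cap W_1$ long before $x$'s turn arrives. Randomly ordering the \emph{pairs} does not randomise $S_1$ within $W_1$, so the hypergeometric heuristic does not apply. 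For the martingale route, swapping two adjacent pairs in the permutation can cascade through the greedy and alter $S_1$ by far more than $O(1)$, so the Lipschitz bound you assert is unjustified. And the two-round exposure of $R$ is beside the point here: the obstruction is about $G$-neighbourhoods in $W_1$ and $W_{k-1}$, which no amount of fresh randomness in $R$ addresses.

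The paper sidesteps this entirely by not committing to a pair in advance. At each step at least $\xi n$ special pairs and at least $\xi n$ vertices in each $W_i$ remain; applying $(\varepsilon,\delta)$-denseness with the \emph{current} leftover set $W_1'\subseteq W_1$ (rather than with all of $W_1$, as you do once at the start) shows that at most $\varepsilon n$ of the remaining $x$'s have fewer than $\delta\xi n$ neighbours in $W_1'$, and symmetrically on the $Y$ side. Taking $\varepsilon<\xi/2$, some remaining pair therefore has both endpoints rich in the current leftovers, and one processes that pair. Once the a.a.s.\ property of $R$ (an edge between every pair of linear-sized subsets of consecutive clusters, hence a path through any tuple of such subsets) is established by a single union bound, the entire greedy becomes deterministic, and no Janson or concentration argument over $S_i$ is needed at all.
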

\begin{proof}[Proof of \ref{lem:adjusting-paths-more-basic}]
If $\c$ is large, in $\R$ we can a.a.s. find an edge between any
subsets $W_{i}'\subseteq W_{i}$ and $W_{i+1}'\subseteq W_{i+1}$
with $\left|W_{i}'\right|,\left|W_{i+1}'\right|\ge\delta\xi n/\left(k-2\right)$,
for any $1\le i<k-1$. We can prove this fact with basically the same
argument as in the proof of \ref{lem:edge-between-large-sets}. In
fact, it immediately follows that a.a.s. between any such $W_{i}'$
and $W_{i+1}'$ there is a matching with more than $\min\left\{ \left|W_{i}'\right|,\left|W_{i+1}'\right|\right\} -\delta\xi n/\left(k-2\right)$
edges, in $\R$. This is because in a maximal matching, the subsets
of unmatched vertices in $W_{i}'$ and $W_{i+1}'$ have no edge between
them.

It follows that (a.a.s.) if we choose any subsets $W_{i}'\subseteq W_{i}$
($1\le i\le k-1$) with $\left|W_{1}'\right|=\dots=\left|W_{k-1}'\right|=\delta\xi n$,
there is a path in $\R$ running through the $W_{1}',\dots,W_{k-1}'$
in that order. Indeed, we have just proven there is a matching of
size greater than $\left|W_{i}'\right|-\delta\xi n/\left(k-2\right)$
between each $W_{i}'$ and $W_{i+1}'$; the union of such matchings
must include a suitable path.

Now, consider a $\xi n$-element subset $Z'$ of the special pairs,
and consider a $\xi n$-element subset $W_{i}'$ of each $W_{i}$.
We would like to find a special path using one of the special pairs
in $Z$ and a vertex from each $W_{i}'$. If we could (a.a.s.) find
such a special path for all such subsets $Z'$ and $W_{i}'$ then
we would be done, for we would be able to choose our desired $\left(1-\xi\right)n$
special paths greedily. That is, given a collection of fewer than
$\left(1-\xi\right)n$ disjoint special paths running through the
clusters, we could find an additional disjoint special path with the
leftover vertices.

Let $X'\subseteq X$ and $Y'\subseteq Y$ be the subsets containing
the vertices in the special pairs in $Z'$. By $\left(\varepsilon,\delta\right)$-denseness
(with $\varepsilon\le\xi$) there are at most $\varepsilon n$ vertices
in $X'$ (respectively, in $Y'$) with fewer than $\delta\xi n$ neighbours
in $W_{1}'$ (respectively, in $W_{k-1}'$), in $\G$. So, if $\varepsilon<\xi/2$,
then there must be a special pair $\left(\x,\y\right)$ such that
$\left|N_{G}\left(\x\right)\cap W_{1}'\right|,\left|N_{G}\left(\y\right)\cap W_{k-1}'\right|\ge\delta\xi n$.
By the discussion at the beginning of the proof, there is a.a.s. a
path in $\R$ running through $N_{G}\left(\x\right)\cap W_{1}',W_{2}',\dots,W_{k-2}',N_{G}\left(\y\right)\cap W_{k-1}'$
in that order. Combining this path with $\x$ and $\y$ gives us a
special path between $\x$ and $\y$, as desired.
\end{proof}
A corollary of \ref{lem:adjusting-paths-more-basic} is that we can
find special paths corresponding to a single template:
\begin{cor}
\label{cor:adjusting-paths-basic}For any $b,\delta,\xi>0$ and any
integer $k>2$, there are $\varepsilon\left(\delta,\xi\right)>0$
and $\c=\c\left(b,\delta,\xi,k\right)$ such that the following holds.

Let $\G$ be a graph on some vertex set $\range N$ ($n\ge bN$),
together with a vertex partition into clusters. Consider a sequence
of clusters $X,W_{1},\dots,W_{k-1},Y$ (there may be repetitions in
the $W_{i}$s), such that $\left|X\right|,\left|Y\right|\ge n$ and
if $W_{i}$ appears $t_{i}$ times in the sequence then $\left|W_{i}\right|\ge t_{i}n$.
Suppose that $\left(X,W_{1}\right)$ and $\left(W_{k-1},Y\right)$
are $\left(\varepsilon,\delta\right)$-dense, and that each $\x\in X$
is bijectively paired with a vertex $\y\in Y$, comprising a \emph{special
pair} $\left(\x,\y\right)$. Let $\R\in\GG\left(N,\c/N\right)$. Then,
in $\G\cup\R$ we can a.a.s. find $\left(1-\xi\right)n$ vertex-disjoint
paths running through $X,W_{1},\dots,W_{k-1},Y$ in that order, each
of which connects the vertices of a special pair (these are \emph{special
paths}).\end{cor}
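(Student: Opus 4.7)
The plan is to reduce to \ref{lem:adjusting-paths-more-basic} by restricting each cluster in the sequence to an appropriately sized subset. Let $\phi\colon X\to Y$ denote the fixed bijection encoding the special pairs. First I would pick a uniformly random subset $X'\subseteq X$ of size exactly $n$ and let $Y'=\phi(X')$, so $|X'|=|Y'|=n$ and the restriction of $\phi$ is still a bijection between them (and, since $\phi$ is fixed, $Y'$ is itself a uniformly random size-$n$ subset of $Y$). Next, for each distinct cluster $W$ appearing with multiplicity $t$ in the sequence $W_1,\dots,W_{k-1}$, I would partition $W$ into $t$ disjoint uniformly random subsets of size $n$ (possible since $|W|\ge tn$) and relabel these following the template to obtain pairwise-disjoint clusters $W_1^*,\dots,W_{k-1}^*$, each of size exactly $n$. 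This produces a sub-configuration with the exact shape demanded by \ref{lem:adjusting-paths-more-basic}.

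The main quantitative point is that the pairs $(X',W_1^*)$ and $(W_{k-1}^*,Y')$ inherit enough density. Since $|X'|/|X|\ge n/N\ge b$, and similarly $|W_1^*|/|W_1|\ge b$ and the analogous bounds hold on the other end, $n=\Omega(N)$ with constant depending on $b$, so \ref{lem:random-preserves-superregular} applies to these independently chosen random subsets and yields a.a.s.\ that both pairs are $(\varepsilon^*,\delta/2)$-dense for a prescribed $\varepsilon^*$. Choosing the $\varepsilon$ in the hypothesis of the corollary to be the $\varepsilon(\varepsilon^*)$ supplied by \ref{lem:random-preserves-superregular} — which depends only on $\varepsilon^*$, hence only on $\delta$ and $\xi$ — gives the required density on the sub-clusters without introducing any $b$-dependence in $\varepsilon$. (Using the cruder \ref{lem:large-preserves-dense} would force $\varepsilon$ to depend on $b$ as well.)

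For the random-graph ingredient, observe that the restriction of $\R\in\GG(N,\c/N)$ to the $(k+1)n$ vertices of $X'\cup W_1^*\cup\dots\cup W_{k-1}^*\cup Y'$ is distributed as $\GG((k+1)n,\c/N)$. Setting $\c=\c^*/b$, where $\c^*=\c^*(\delta/2,\xi,k)$ is the constant supplied by \ref{lem:adjusting-paths-more-basic}, and using $N\le n/b$, we obtain $\c/N\ge \c^*/n$, so the restricted random graph is stochastically at least as dense as that lemma requires. Applying \ref{lem:adjusting-paths-more-basic} to the restricted configuration $(X',W_1^*,\dots,W_{k-1}^*,Y')$ with bijection $\phi|_{X'}$ and density $\delta/2$ produces a.a.s.\ the desired $(1-\xi)n$ vertex-disjoint special paths, which, unrolled under the original labeling, traverse $X,W_1,\dots,W_{k-1},Y$ in the prescribed order with the correct multiplicities.

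Conceptually this is nothing more than a ``take the right subsets and rescale'' reduction, so the main obstacle is just bookkeeping of parameters — in particular, arranging that $\varepsilon$ in the conclusion depends only on $\delta$ and $\xi$, which is why the sub-clusters are chosen randomly so that \ref{lem:random-preserves-superregular} (rather than \ref{lem:large-preserves-dense}) can be invoked.
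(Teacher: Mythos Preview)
Your proof is correct and follows essentially the same approach as the paper: take uniformly random $n$-vertex subsets of each cluster (splitting repeated $W_i$ into disjoint pieces), invoke \ref{lem:random-preserves-superregular} to retain denseness, and apply \ref{lem:adjusting-paths-more-basic}. Your version is in fact slightly more careful than the paper's, which selects $X'$ and $Y'$ independently without explaining how the bijection of special pairs is preserved; your choice $Y'=\phi(X')$ handles this cleanly while keeping $Y'$ uniformly random, and your explicit bookkeeping of why $\varepsilon$ depends only on $(\delta,\xi)$ and why $\c=\c^*/b$ suffices is a welcome clarification.
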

\begin{proof}
Uniformly at random choose $t_{i}$ disjoint $n$-vertex subsets of
each $W_{i}$. This gives a total of $k-1$ disjoint subsets; arrange
these into a sequence $W_{1}',\dots,W_{k-1}'$ in such a way that
$W_{i}'\subseteq W_{i}$. Also uniformly at random choose $n$-vertex
subsets $X'\subseteq X$ and of $Y'\subseteq Y$. By \ref{lem:random-preserves-superregular},
$\left(X',W_{1}'\right)$ and $\left(W_{k-1}',Y'\right)$ are a.a.s.
$\left(\varepsilon',\delta\right)$-dense, where we can make $\varepsilon'$
small by choice of $\varepsilon$. We can therefore directly apply
\ref{lem:adjusting-paths-more-basic}.
\end{proof}
We can now prove \ref{lem:adjusting-paths}.
\begin{proof}[Proof of \ref{lem:adjusting-paths}]
The basic idea is to split the clusters into many random subsets,
one for each good template, and to apply \ref{cor:adjusting-paths-basic}
many times. (Note that there are fewer than $Q^{k}=O\left(1\right)$
different templates). We also need to guarantee super-regularity in
what remains. For this, we set aside another random subset of each
cluster, which we will not touch (we used a similar idea in \ref{sub:fix-endpoints}).

If we are able to successfully find our special paths, those with
template $\tau$ will contain some number $t_{\bi}^{\tau}\left|W_{\bi}\right|$
of vertices from $W_{\bi}$. (Note that $t_{\bi}^{\tau}$ depends
only on the multiset of desired templates, and not on the specific
special paths we find). After removing the vertices in our found special
paths, there will be $l_{\bi}\left|W_{\bi}\right|=\left(1-\sum_{\tau}t_{\bi}^{\tau}\right)\left|W_{\bi}\right|\ge\gamma\left|W_{\bi}\right|$
vertices remaining in $W_{\bi}$. Similarly, the special paths with
good template $\tau$ will take some number $s_{i}^{\tau}\left|Z_{i}\right|$
of special pairs from each $Z_{i}$, leaving $r_{i}\left|Z_{i}\right|\ge\gamma\left|Z_{i}\right|$
special pairs.

Now, for some small $\xi>0$ to be determined, we want to partition
each $W_{\bi}$ into parts $W_{\bi}^{\tau}$ of size at least $t_{\bi}^{\tau}\left|W_{\bi}\right|/\left(1-\xi\right)$,
and a ``leftover'' part $W_{\bi}'$ of size $\left|W_{\bi}\right|-\sum_{\tau}\left|W_{\bi}^{\tau}\right|$.
Similarly, we want to partition each $Z_{i}$ into parts $Z_{i}^{\tau}$
of size at least $s_{i}^{\tau}\left|Z_{i}\right|/\left(1-\xi\right)$
and a leftover part $Z_{i}'$. Let $X_{\bi}^{\tau}\subseteq X_{\bi}$
and $X_{\bi}'\subseteq X_{\bi}$ contain the special vertices in the
special pairs in the $Z_{\left[\bi\right]}^{\tau}$ and $Z_{\left[\bi\right]}'$
respectively.

Choose the sizes of the $W_{\bi}^{\tau}$ and $Z_{i}^{\tau}$ such
that each $\left|W_{\bi}^{\tau}\right|\ge\xi sn/Q^{k}$ and $\left|Z_{i}^{\tau}\right|\ge\xi sn/\left(\left(k-1\right)Q^{k}\right)$,
and also each $\left|W_{\bi}'\right|\ge\left(l_{\bi}/2\right)\left|W_{\bi}\right|$
and $\left|Z_{i}'\right|\ge\left(r_{i}/2\right)\left|Z_{i}\right|$
(this will be possible if $\xi$ is small enough to satisfy $\left(1-\gamma\right)/\left(1-\xi\right)+\xi\le\left(1-\gamma/2\right)$).
Given these part sizes choose our partitions uniformly at random.
(Note that this means the marginal distribution of each part is uniform
on subsets of its size).

By \ref{lem:random-preserves-superregular}, for any $\varepsilon'>0$
we can choose $\varepsilon_{3}$ such that each $\left(X_{\bi}^{\tau},W_{\flip{\bi}}^{\tau}\right)$
is a.a.s. $\left(\varepsilon',\delta_{3}/2\right)$-dense. Moreover
a.a.s. each $w\in W_{\flip{\bi}}$ has at least $\left(\delta_{3}/2\right)\left|W_{\bi}'\right|$
neighbours in $W_{\bi}'$ and at least $\left(\delta_{3}/2\right)\left|X_{\bi}'\right|$
neighbours in $X_{\bi}'$, and each $x\in X_{\bi}$ has at least $\left(\delta_{3}/2\right)\left|W_{\flip{\bi}}'\right|$
neighbours in $W_{\flip{\bi}}'$. We can find our desired special
pairs with good template $\tau$ using the clusters $X_{\bi}^{\tau}$,
$W_{\bi}^{\tau}$ and \ref{cor:adjusting-paths-basic}, provided $\varepsilon'$
is small (in the notation of \ref{cor:adjusting-paths-basic}, we
need $\varepsilon'\le\varepsilon\left(\delta_{3}/2,\xi\right)$).

After deleting the vertices in these special paths, by \ref{lem:large-preserves-dense}
each $\left(X_{\bi},W_{\flip{\bi}}\right)$ and $\left(W_{\bi},W_{\flip{\bi}}\right)$
are $\left(\varepsilon_{3}/\gamma,\delta_{3}\right)$-dense. By the
condition on the sizes of the $W_{\bi}'$s and $Z_{i}'$s and the
bounds on the neighbourhood sizes in the last paragraph, each $w\in W_{\flip{\bi}}$
now has at least $\left(\delta_{3}/4\right)\left|W_{\bi}\right|$
neighbours in $W_{\bi}$ and at least $\left(\delta_{3}/4\right)\left|X_{\bi}\right|$
neighbours in $X_{\bi}$, and each $x\in X_{\bi}$ has at least $\left(\delta_{3}/4\right)\left|W_{\flip{\bi}}\right|$
neighbours in $W_{\flip{\bi}}$. So each $\left(X_{\bi},W_{\flip{\bi}}\right)$
and $\left(W_{\bi},W_{\flip{\bi}}\right)$ are $\left(\varepsilon_{3}/\gamma,\delta_{3}/4\right)$-super-regular,
as required.
\end{proof}
Having proved \ref{lem:adjusting-paths}, we now just need to show
that there are suitable good templates so that after finding special
paths with those templates and removing their vertices, we end up
with $2\left|W_{\bi}\right|=\left(k-1\right)\left|X_{\bi}\right|$
for each $\bi$.

Let
\[
m_{i}=\min\left\{ \left|Z_{i}\right|,\,2\left|W_{\left(i,1\right)}\right|/\left(k-1\right),\,2\left|W_{\left(i,2\right)}\right|/\left(k-1\right)\right\} .
\]
Impose that $k$ is odd; we want to leave $L_{i}^{Z}:=\floor{m_{i}/2}$
special pairs in each $Z_{i}$ and $L_{i}^{W}:=\left(\left(k-1\right)/2\right)\floor{m_{i}/2}$
vertices in each $W_{\left(i,h\right)}$, leaving a total of $L=\sum_{i}\left(2L_{i}^{Z}+2L_{i}^{W}\right)$
vertices. (The reason we divide $m_{i}$ by 2 will become clear in
the proof of \ref{claim:find-templates}, to follow). Recall from
\ref{sub:fix-endpoints} that each $\rho^{-1}\le\left(\left(k-1\right)/2\right)\left(\left|Z_{\left[\bi\right]}\right|/\left|W_{\bi}\right|\right)\le\rho$,
so $m_{i}\ge\rho^{-1}\left|Z_{i}\right|$ and 
\[
L_{i}^{Z}/\left|Z_{i}\right|\ge\left(1+o\left(1\right)\right)\rho^{-1}/2,\quad L_{\left[\bi\right]}^{W}/\left|W_{\bi}\right|\ge\left(1+o\left(1\right)\right)\rho^{-2}/2.
\]
Provided we can find suitable templates, we can therefore apply \ref{lem:adjusting-paths}
with $\gamma=\rho^{-2}/3$. We can actually choose our templates almost
arbitrarily; we wrap up the details in the following claim.
\begin{claim}
\label{claim:find-templates}For large $k$, we can choose good templates
in such a way that if we remove the vertices of special paths corresponding
to these templates, there will be $L_{i}^{Z}$ special pairs left
in each $Z_{i}$ and $L_{\left[\bi\right]}^{W}$ vertices left in
each $W_{\bi}$.\end{claim}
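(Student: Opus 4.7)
The plan is to reduce the choice of templates to a bookkeeping problem. Write $m_i' = |Z_i| - L_i^Z$ for the number of special paths we want to root at $Z_i$; this is forced on us by the requirement to leave exactly $L_i^Z$ pairs unused. A good template rooted at $Z_i$ consumes one vertex from each of $X_{(i,1)}, X_{(i,2)}$ (the two endpoints) and one from each of $W_{(i,1)}, W_{(i,2)}$ (the mandatory first and last $W$-positions), while its remaining $k-3$ interior positions can be set to any cluster $W_{\bj}$. I would therefore encode the multiset of templates by (a) specifying $m_i'$ paths rooted at each $Z_i$ with an arbitrary orientation for each pair, and (b) distributing the $(k-3)\sum_i m_i'$ free interior slots among the clusters $W_{\bj}$ to meet a residual demand.

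For this to work, the residual demand $c_{\bj} := |W_{\bj}| - L_{[\bj]}^W - m_{[\bj]}'$ from each $W_{\bj}$ (after subtracting its forced contribution from the first/last positions) must be nonnegative, and the $c_{\bj}$ must sum to the total number of free interior slots. Using $|W| = (k-1)|Z|$, which follows from $2|W| = (k-1)|X|$ in \ref{sub:fix-endpoints} together with $|Z| = |X|/2$, and the identities $\sum_{\bj} L_{[\bj]}^W = 2\sum_i L_i^W = (k-1)\sum_i L_i^Z$ and $\sum_{\bj} m_{[\bj]}' = 2\sum_i m_i'$, one gets
\[
\sum_{\bj} c_{\bj} = (k-1)|Z| - (k-1)\sum_i L_i^Z - 2\sum_i m_i' = (k-1)\sum_i m_i' - 2\sum_i m_i' = (k-3)\sum_i m_i',
\]
which exactly matches the total supply. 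Nonnegativity of $c_{\bj}$ follows for large $k$ from the bounds in \ref{sub:fix-endpoints}: since $|Z_{[\bj]}| = |X_{\bj}| \le (2\rho/(k-1))|W_{\bj}|$ and $L_{[\bj]}^W \le ((k-1)/2)\cdot|W_{\bj}|/(k-1) = |W_{\bj}|/2$, we have $c_{\bj} \ge (1/2 - 2\rho/(k-1))|W_{\bj}| > 0$ once $k$ is large relative to $\rho$. Given feasibility, the assignment of interior slots is essentially arbitrary: list the $(k-3)\sum_i m_i'$ slots in any canonical order and greedily fill each $W_{\bj}$ up to its quota $c_{\bj}$.

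It remains to check the hypotheses of \ref{lem:adjusting-paths}, and this is where the choice $\gamma = \rho^{-2}/3$ was made. The number of times $X_{\bi}$ appears in our templates is $m_{[\bi]}'$, so the condition $m_{[\bi]}' \le (1-\gamma)|X_{\bi}|$ reduces to $L_{[\bi]}^Z \ge \gamma|Z_{[\bi]}|$, which holds since $L_i^Z/|Z_i| \ge (1+o(1))\rho^{-1}/2 \ge \rho^{-2}/3$ for $\rho \ge 1$. Similarly, the total number of appearances of $W_{\bi}$ in our templates equals $|W_{\bi}| - L_{[\bi]}^W$, so the condition becomes $L_{[\bi]}^W \ge \gamma|W_{\bi}|$, which holds since $L_i^W/|W_{\bi}| \ge (1+o(1))\rho^{-2}/2 \ge \gamma$. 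The only real obstacle is the counting identity combined with nonnegativity of $c_{\bj}$; both hinge on the ratio $|W_{\bj}|/|Z_{[\bj]}|$ being large, which is guaranteed by choosing $k$ sufficiently large relative to $\rho$ (and since $\rho$ depends only on $\alpha$, this is consistent with the requirement that $k$ be chosen independently of $\varepsilon$).
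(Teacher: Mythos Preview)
Your proof is correct and follows essentially the same approach as the paper. Both arguments reduce to the observation that after reserving the mandatory first and last $W$-positions (one from each of $W_{(i,1)}$ and $W_{(i,2)}$ per path rooted at $Z_i$), the remaining $k-3$ interior slots can be assigned arbitrarily; the only nontrivial check is that each $W_{\bj}$ has enough vertices for its mandatory contribution, which is your nonnegativity of $c_{\bj}$ and the paper's inequality $|W_{\bi}'|/|X_{\bi}'|\ge 1$. Your verification of the $\gamma$-hypothesis at the end is extra---the paper places that computation just \emph{before} the claim rather than inside its proof---but it is correct and does no harm.
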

\begin{proof}
It is convenient to describe our templates by a collection of $M:=\left(n-L\right)/\left(k+1\right)$
disjoint good special sequences. The equivalence classes of those
special sequences will give our templates.

Arbitrarily choose subsets $Z_{i}'\subseteq Z_{i}$ and $W_{\bi}'\subseteq W_{\bi}$
with $\left|Z_{i}'\right|=\left|Z_{i}\right|-L_{i}^{Z}$ and $\left|W_{\bi}'\right|=\left|W_{\bi}\right|-L_{\left[\bi\right]}^{W}$.
Let $X_{\bi}'\subseteq X_{\bi}$ be the subset of special vertices
in $X_{\bi}$ coming from special pairs in $Z_{\left[\bi\right]}'$.
Let $Z'=\bigcup_{i}Z_{i}'$ be the set of all special pairs in our
subsets, let $X'=\bigcup_{\bi}X_{\bi}'$ be the set of all special
vertices in those special pairs, and let $W'=\bigcup_{\bi}W_{\bi}'$
be the set of free vertices in our subsets. Note that $\left|Z'\right|=M=\left|X'\right|/2=\left|W'\right|/\left(k-1\right)$.

Recall that $\rho^{-1}\le\left(\left(k-1\right)/2\right)\left(\left|X_{\bi}\right|/\left|W_{\bi}\right|\right)\le\rho$
and note that $\left|W_{\bi}'\right|\ge\left|W_{\bi}\right|-m_{i}/2\ge\left|W_{\bi}\right|/2$.
So, if $k$ is large then for each $\bi$ we have
\[
\frac{\left|W_{\bi}'\right|}{\left|X_{\bi}'\right|}\ge\frac{\left|W_{\bi}\right|/2}{\left|X_{\bi}\right|}\ge\frac{k-1}{4\rho}\ge1.
\]
That is, for each $\bi$ there are at least as many vertices in $W_{\bi}'$
as in $X_{\bi}'$. (The only reason we divided $m_{i}$ by 2 in the
definitions of $L_{i}^{Z}$ and $L_{i}^{W}$ was to guarantee this).
We now start filling our special sequences with the vertices in $X'\cup W'$
(note $\left|X'\cup W'\right|=\left(k+1\right)M$, as required). For
each special sequence we choose its first and last vertex to be the
vertices of a special pair in $Z'$ (say we choose $\left\{ x,y\right\} \in Z_{i}'$,
and we choose $x\in X_{\left(i,1\right)}$ and $y\in X_{\left(i,2\right)}$
to be the first and last vertices in the special sequence respectively).
Then we choose arbitrary vertices from $W_{\left(i,1\right)}'$ and
$W_{\left(i,2\right)}'$ to be the second and second-last vertices
in the sequence (in our case, a vertex from $W_{\left(i,2\right)}'$
should be second and a vertex from $W_{\left(i,1\right)}'$ should
be second-last). We have just confirmed that there are enough vertices
in the $W_{\bi}$ for this to be possible. After we have determined
the first, second, second last and last vertices of every sequence,
we can use the remaining $\left(k-3\right)M$ vertices in $W'$ to
complete our special sequences arbitrarily.\end{proof}
\begin{rem}
If we are careful, instead of choosing our templates arbitrarily we
can strategically choose them in such a way that each cluster is only
involved in a few different templates. In the proof of \ref{lem:adjusting-paths},
we would then not need to divide each cluster into nearly as many
as $Q^{k+1}$ subsets. This would mean that we can avoid the use of
the powerful results of \cite{GKRS07} and make do with a simpler
and weaker variant of \ref{lem:random-preserves-superregular}.
\end{rem}
We now apply \ref{lem:adjusting-paths} using the templates from \ref{claim:find-templates},
and remove the vertices of the special paths that we find. Update
the $W_{\bi}$, $X_{\bi}$ and $Z_{i}$; we finally have $2\left|W_{\bi}\right|=\left(k-1\right)\floor{m_{\left[\bi\right]}/2}=\left(k-1\right)\left|X_{\bi}\right|$
for each $\bi$, and each $\left(X_{\bi},W_{\flip{\bi}}\right)$ and
$\left(W_{\bi},W_{\flip{\bi}}\right)$ are $\left(\varepsilon_{4},\delta_{4}\right)$-super-regular,
where $\delta_{4}=\delta_{3}/4$ and $\varepsilon_{4}=3\rho^{2}\varepsilon_{3}$.

\subsubsection{Completing the embedding with the blow-up lemma\label{sub:blow-up}}

We have now finished with the random edges in $\R$; what remains
is sufficiently well-structured that we can embed the remaining paths
using the blow-up lemma and the super-regularity in $\G$. We first
give a statement of the blow-up lemma, due to Koml\'os, S\'ark\"ozy
and Szemer\'edi.
\begin{lem}[Blow-up Lemma \cite{KSS97}]
Let $\delta,\Delta>0$ and $r\in\NN$. There is $\varepsilon=\varepsilon\left(r,\delta,\Delta\right)$
such that the following holds.

Let $C$ be a graph on the vertex set $\range r$, and let $n_{1},\dots,n_{r}\in\NN^{+}$.
Let $V_{1},\dots,V_{r}$ be pairwise disjoint sets of sizes $n_{1},\dots,n_{r}$.
We construct two graphs on the vertex set $V=\bigcup_{i=1}^{r}V_{i}$
as follows. The first graph $b_{n_{1},\dots,n_{r}}\left(C\right)$
(the ``complete blow-up'') is obtained by putting the complete bipartite
graph between $V_{i}$ and $V_{j}$ whenever $\left\{ i,j\right\} $
is an edge in $C$. The second graph $B\subseteq b_{n_{1},\dots,n_{r}}\left(C\right)$
(a ``super-regular blow-up'') is obtained by putting edges between
each such $V_{i}$ and $V_{j}$ so that $\left(V_{i},V_{j}\right)$
is an $\left(\varepsilon,\delta\right)$-super-regular pair.

If a graph $H$ with maximum degree bounded by $\Delta$ can be embedded
into $b\left(C\right)$, then it can be embedded into $B$.
\end{lem}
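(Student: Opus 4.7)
The plan is to follow the original randomized greedy approach of Koml\'os, S\'ark\"ozy and Szemer\'edi. I would embed the vertices of $H$ one at a time into $B$, making random choices along the way, while holding back a small buffer of vertices in each class $V_i$ for a deterministic endgame based on a perfect matching argument.

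First I would fix an ordering $v_1, \ldots, v_n$ of $V(H)$ and, in each class $V_i$, set aside a buffer $B_i \subseteq V_i$ of size roughly $\sqrt{\varepsilon}\,|V_i|$ to be used only at the very end. For each unembedded vertex $u$ of $H$ destined for $V_i$, maintain its candidate set $C(u) \subseteq V_i \setminus B_i$ of currently-available vertices that are adjacent in $B$ to every already-embedded neighbor of $u$. In the main phase, process the non-buffer vertices in order: when it is $u$'s turn, place $u$ at a uniformly random vertex of $C(u)$ and update all candidate sets accordingly.

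The core invariant to maintain is that for each unembedded $u$ destined for $V_i$,
\[
|C(u)| \;\approx\; \delta^{d(u)} \, \bigl|V_i \setminus B_i\bigr|,
\]
where $d(u)$ counts the neighbors of $u$ already embedded. Super-regularity of the pairs $(V_i, V_j)$ forces this decay: placing one neighbor of $u$ typically shrinks $|C(u)|$ by a factor close to $\delta$. Azuma-type concentration, combined with a union bound over $u$ and over time, controls the deviations provided $\varepsilon$ is small. A small set of ``dangerous'' vertices whose candidate sets fall out of the typical range will inevitably arise; these are detected and embedded out of schedule, before their candidate sets vanish. Because $H$ has maximum degree $\Delta = O(1)$, the number of reschedulings remains negligible compared to $|B_i|$.

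After the main phase, only the $H$-vertices originally assigned to the buffers remain unembedded, and for each $i$ there are equally many unused vertices of $V_i$ (the remaining buffer vertices together with those pulled out during rescheduling). For each $i$ I would show that the bipartite candidacy graph between these two sides satisfies Hall's condition: small subsets expand by the super-regularity minimum-degree condition, and moderate subsets expand by applying the $(\varepsilon, \delta)$-density condition to the buffer. K\"onig's theorem then yields the perfect matchings that complete the embedding. The main obstacle throughout is the tight coupling of all candidate sets through a common sequence of random choices, which compounds deviations multiplicatively along each branch of $H$ of length up to $\Delta$; the scheduling of dangerous vertices, together with a careful choice of buffer size, absorbs these deviations, and this is what ultimately dictates how small $\varepsilon$ must be chosen as a function of $r$, $\delta$ and $\Delta$.
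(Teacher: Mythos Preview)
The paper does not prove this lemma at all: it is quoted from \cite{KSS97} and used as a black box in \S2.2.5, so there is no in-paper argument to compare your proposal against. Your sketch does track the broad outline of the original Koml\'os--S\'ark\"ozy--Szemer\'edi proof (randomized greedy embedding, candidate sets shrinking by a factor close to $\delta$ per embedded neighbour, a queue for ``dangerous'' vertices, and a Hall/K\"onig matching to finish), so in spirit it is the right approach.

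That said, as written the sketch blurs a point that matters for the argument to go through. In the KSS proof the buffer is a set of \emph{$H$-vertices} (chosen to be pairwise at distance at least $3$ in $H$, so that their candidate sets are independent of one another and stay large), not a set of host vertices $B_i\subseteq V_i$ withheld from all candidate sets. If you withhold a fixed $B_i$ from every $C(u)$, you have no mechanism ensuring that the leftover host vertices at the end (which include all of $B_i$) lie in the candidate sets of the leftover $H$-vertices, and the final matching need not exist. The fix is exactly the KSS device: designate buffer vertices on the $H$ side, order $V(H)$ so these come last, and keep the full $V_i$ available throughout; then the super-regular minimum-degree condition guarantees the endgame Hall condition. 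With that correction (and the usual care in choosing the ordering and buffer fraction) your plan becomes a faithful outline of the cited proof.
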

Now we describe the setup to apply the blow-up lemma. For each $i\in\range q$,
define 
\[
S_{i}^{0}=X_{\left(i,1\right)},\;S_{i}^{1}=W_{\left(i,2\right)},\;S_{i}^{2}=W_{\left(i,1\right)},\;S_{i}^{3}=X_{\left(i,2\right)}.
\]
We construct an auxiliary graph $G_{i}$ as follows. Start with the
subgraph of $\G$ induced by $S_{i}^{0}\cup S_{i}^{1}\cup S_{i}^{2}\cup S_{i}^{3}$,
and remove every edge not between some $S_{i}^{\ell}$ and $S_{i}^{\ell+1}$.
Identify the two vertices in each special pair to get a super-regular
``cluster-cycle'' with 3 clusters $S_{i}^{Z}$, $S_{i}^{1}$ and
$S_{i}^{2}$. To be precise, the elements of $S_{i}^{Z}$ are the
$\left|Z_{i}\right|$ (ordered!) special pairs $\left(\x_{1},\x_{2}\right)\in X_{\left(i,1\right)}\times X_{\left(i,2\right)}$;
such a special pair is adjacent to $v\in S_{i}^{\h}$ if $\x_{\h}$
is adjacent to $v$. The purpose of defining $G_{i}$ is that a cycle
containing exactly one vertex $\left(\x,\y\right)$ from $S_{i}^{Z}$
corresponds to a special path connecting $\x$ and $\y$. See \ref{fig:cluster-paths}.

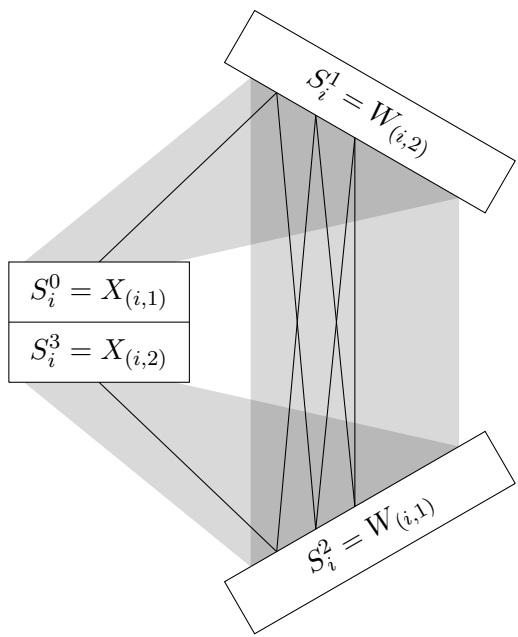
\begin{figure}[h]
\begin{center}
\begin{tikzpicture}[scale=0.8]

\def\sqrt{1.41421356237}

\def\rh{1}
\def\rw{3}

\draw (0,-\rh) -| (\rw,\rh) -| (0,-\rh);
\draw (0,0) -- (\rw,0);

\node[shape=coordinate] (xb11) at (\rw-0.25,-\rh) {};
\node[shape=coordinate] (xb12) at (0.25,-\rh) {};
\node[shape=coordinate] (xb21) at (0.25,\rh) {};
\node[shape=coordinate] (xb22) at (\rw-0.25,\rh) {};
\node[shape=coordinate] (xc1) at (\rw/2,-\rh) {};
\node[shape=coordinate] (xc2) at (\rw/2,\rh) {};

\node at (\rw/2,\rh/2) {$S_i^0=X_{(i,1)}$};
\node at (\rw/2,-\rh/2) {$S_i^3=X_{(i,2)}$};

\def\rw{5}
\def\rsx{6}
\def\rsy{3.5}
\FPeval{\rshifty2}{\rsy*2}

\def\rx{6}
\def\rr{30}
\FPeval{\rrot2}{\rr+180}

\begin{scope}[shift={(\rsx,-\rsy)},rotate=\rrot2]

\def\rn{1}
\node[shape=coordinate] (r\rn1) at (-\rw/2,0.5){};
\node[shape=coordinate] (r\rn2) at (-\rw/2,-0.5){};
\node[shape=coordinate] (r\rn3) at (\rw/2,-0.5){};
\node[shape=coordinate] (r\rn4) at (\rw/2,0.5){};
\draw (r\rn1) -- (r\rn2) -- (r\rn3) -- (r\rn4) -- (r\rn1);
\node[shape=coordinate] (r{\rn}p1) at (-\rw*0.3,-0.5){};
\node[shape=coordinate] (r{\rn}p2) at (-\rw*0.15,-0.5){};
\node[shape=coordinate] (r{\rn}p3) at (0,-0.5){};
\node[shape=coordinate] (r{\rn}p4) at (\rw*0.15,-0.5){};
\node[shape=coordinate] (r{\rn}p5) at (\rw*0.3,-0.5){};
\node[shape=coordinate] (r{\rn}b1) at (-\rw/2+0.5,-0.5){};
\node[shape=coordinate] (r{\rn}b2) at (\rw/2-0.5,-0.5){};

\fill [opacity=0.15] (xb\rn1) -- (r{\rn}b1) -- (r{\rn}b2) -- (xb\rn2) -- cycle;

\begin{scope}[rotate=-\rrot2]
\begin{scope}[shift={(0,\rshifty2)},rotate=-\rr]

\def\rn{2}
\node[shape=coordinate] (r\rn1) at (-\rw/2,0.5){};
\node[shape=coordinate] (r\rn2) at (-\rw/2,-0.5){};
\node[shape=coordinate] (r\rn3) at (\rw/2,-0.5){};
\node[shape=coordinate] (r\rn4) at (\rw/2,0.5){};
\draw (r\rn1) -- (r\rn2) -- (r\rn3) -- (r\rn4) -- (r\rn1);
\node[shape=coordinate] (r{\rn}p1) at (-\rw*0.3,-0.5){};
\node[shape=coordinate] (r{\rn}p2) at (-\rw*0.15,-0.5){};
\node[shape=coordinate] (r{\rn}p3) at (0,-0.5){};
\node[shape=coordinate] (r{\rn}p4) at (\rw*0.15,-0.5){};
\node[shape=coordinate] (r{\rn}p5) at (\rw*0.3,-0.5){};
\node[shape=coordinate] (r{\rn}b1) at (-\rw/2+0.5,-0.5){};
\node[shape=coordinate] (r{\rn}b2) at (\rw/2-0.5,-0.5){};

\fill [opacity=0.15] (xb\rn1) -- (r{\rn}b1) -- (r{\rn}b2) -- (xb\rn2) -- cycle;

\fill [opacity=0.15] (r{1}b2) -- (r{2}b1) -- (r{2}b2) -- (r{1}b1) -- cycle;
\draw (xc1) -- (r{1}p5);
\draw (r{1}p5)-- (r{2}p2);
\draw (r{2}p2) -- (r{1}p3);
\draw (r{1}p3) -- (r{2}p3);
\draw (r{2}p3) -- (r{1}p4);
\draw (r{1}p4) -- (r{2}p1);
\draw (r{2}p1) -- (xc2);

\end{scope}
\end{scope}

\end{scope}


\node[rotate=\rr] at (\rsx,-\rsy) {$S_i^2=W_{(i,1)}$};
\node[rotate=-\rr] at (\rsx,\rsy) {$S_i^1=W_{(i,2)}$};

\end{tikzpicture}
\end{center}

\caption{\label{fig:cluster-paths}An example special path is shown, for $k=7$.}
\end{figure}

Let $n_{i}^{Z}=\left|S_{i}^{Z}\right|=\left|Z_{i}\right|$. Note that
$G_{i}$ is a $\left(\varepsilon_{4},\delta_{4}\right)$-super-regular
blow-up of a 3-cycle $C_{3}$, with part sizes 
\[
n_{i}^{Z},\quad n_{i}^{1}:=\left(\left(k-1\right)/2\right)n_{i}^{Z},\quad n_{i}^{2}:=\left(\left(k-1\right)/2\right)n_{i}^{Z}.
\]
The corresponding complete blow-up $b_{n_{i}^{Z},n_{i}^{1},n_{i}^{2}}\left(C_{3}\right)$
contains $n_{i}^{Z}$ vertex-disjoint $k$-cycles (each has a vertex
in $S_{i}^{Z}$ and its other $k-1$ vertices alternate between $S_{i}^{1}$
and $S_{i}^{2}$). By the blow-up lemma, $G_{i}$ also contains $n_{i}^{Z}$
vertex-disjoint $k$-cycles. Since $k$ is odd, each of these must
use at least one vertex from $S_{i}^{Z}$, but since $\left|S_{i}^{Z}\right|=n_{i}^{Z}$,
each cycle must then use exactly one vertex from $S_{i}^{Z}$. These
cycles correspond to special paths which complete our embedding of
$\T$.

\section{Concluding Remarks}

We have proved that any given bounded-degree spanning tree typically
appears when a linear number of random edges are added to an arbitrary
dense graph. There are a few interesting questions that remain open.
Most prominent is the question of embedding more general kinds of
spanning subgraphs into randomly perturbed graphs. It would be particularly
interesting if the general result of \cite{BST09} (concerning arbitrary
spanning graphs with bounded degree and low bandwidth) could be adapted
to this setting. There is also the question of universality: whether
it is true that a randomly perturbed dense graph typically contains
every bounded-degree spanning tree at once. Finally, it is possible
that our use of Szemer\'edi's regularity lemma could be avoided,
thus drastically improving the constants $\c\left(\a\right)$ and
perhaps allowing us to say something about random perturbations of
graphs which have slightly sublinear minimum degree.

\noindent \textbf{Acknowledgement.} Parts of this work were carried
out when the first author visited the Institute for Mathematical Research
(FIM) of ETH Zurich, and also when the third author visited the School
of Mathematical Sciences of Tel Aviv University, Israel. We would
like to thank both institutions for their hospitality and for creating
a stimulating research environment.

\providecommand{\bysame}{\leavevmode\hbox to3em{\hrulefill}\thinspace}
\providecommand{\MR}{\relax\ifhmode\unskip\space\fi MR }
\providecommand{\MRhref}[2]{%
  \href{http://www.ams.org/mathscinet-getitem?mr=#1}{#2}
}
\providecommand{\href}[2]{#2}


\begin{thebibliography}{10}

\bibitem{AKS07}
N.~Alon, M.~Krivelevich, and B.~Sudakov, \emph{Embedding nearly-spanning
  bounded degree trees}, Combinatorica \textbf{27} (2007), no.~6, 629--644.

\bibitem{BHM04}
T.~Bohman, A.~Frieze, M.~Krivelevich, and R.~Martin, \emph{Adding random edges
  to dense graphs}, Random Structures \& Algorithms \textbf{24} (2004), no.~2,
  105--117.

\bibitem{BFM03}
T.~Bohman, A.~Frieze, and R.~Martin, \emph{How many random edges make a dense
  graph {H}amiltonian?}, Random Structures \& Algorithms \textbf{22} (2003),
  no.~1, 33--42.

\bibitem{BST08}
J.~B{\"o}ttcher, M.~Schacht, and A.~Taraz, \emph{Spanning 3-colourable
  subgraphs of small bandwidth in dense graphs}, Journal of Combinatorial
  Theory, Series B \textbf{98} (2008), no.~4, 752--777.

\bibitem{BST09}
\bysame, \emph{Proof of the bandwidth conjecture of {B}ollob{\'a}s and
  {K}oml{\'o}s}, Mathematische Annalen \textbf{343} (2009), no.~1, 175--205.

\bibitem{Csa08}
B.~Csaba, \emph{On embedding well-separable graphs}, Discrete Mathematics
  \textbf{308} (2008), no.~19, 4322--4331.

\bibitem{Dir52}
G.~A. Dirac, \emph{Some theorems on abstract graphs}, Proceedings of the London
  Mathematical Society \textbf{3} (1952), no.~1, 69--81.

\bibitem{GKRS07}
S.~Gerke, Y.~Kohayakawa, V.~R{\"o}dl, and A.~Steger, \emph{Small subsets
  inherit sparse $\varepsilon$-regularity}, Journal of Combinatorial Theory,
  Series B \textbf{97} (2007), no.~1, 34--56.

\bibitem{JLR00}
S.~Janson, T.~{\L{}}uczak, and A.~Ruci{\'n}ski, \emph{Random graphs}, Cambridge
  University Press, 2000.

\bibitem{KS96}
J.~Koml\'os and M.~Simonovits, \emph{Szemer\'edi's regularity lemma and its
  applications in graph theory}, Combinatorics, {P}aul {E}rd\H{o}s is eighty
  (D.~Mikl\'os, V.~Sos, and T.~Szonyi, eds.), Bolyai Society mathematical
  studies, vol.~2, J\'anos {B}olyai Mathematical Society, Budapest, 1996,
  pp.~295--352.

\bibitem{KSS95}
J.~Koml{\'o}s, G.~N. S{\'a}rk{\"o}zy, and E.~Szemer{\'e}di, \emph{Proof of a
  packing conjecture of {B}ollob{\'a}s}, Combinatorics, Probability and
  Computing \textbf{4} (1995), no.~03, 241--255.

\bibitem{KSS97}
\bysame, \emph{Blow-up lemma}, Combinatorica \textbf{17} (1997), no.~1,
  109--123.

\bibitem{KSS01}
\bysame, \emph{Spanning trees in dense graphs}, Combinatorics, Probability and
  Computing \textbf{10} (2001), 397--416.

\bibitem{Kri10}
M.~Krivelevich, \emph{Embedding spanning trees in random graphs}, SIAM Journal
  on Discrete Mathematics \textbf{24} (2010), no.~4, 1495--1500.

\bibitem{KKS15}
M.~Krivelevich, M.~Kwan, and B.~Sudakov, \emph{Cycles and matchings in randomly
  perturbed digraphs and hypergraphs}, Combinatorics, Probability and
  Computing, to appear. (2015).

\bibitem{KST06}
M.~Krivelevich, B.~Sudakov, and P.~Tetali, \emph{On smoothed analysis in dense
  graphs and formulas}, Random Structures \& Algorithms \textbf{29} (2006),
  no.~2, 180--193.

\bibitem{KOT05}
D.~K{\"u}hn, D.~Osthus, and A.~Taraz, \emph{Large planar subgraphs in dense
  graphs}, Journal of Combinatorial Theory, Series B \textbf{95} (2005), no.~2,
  263--282.

\bibitem{Mon14}
R.~Montgomery, \emph{Embedding bounded degree spanning trees in random graphs},
  arXiv preprint arXiv:1405.6559 (2014).

\bibitem{Pos76}
L.~P{\'o}sa, \emph{{H}amiltonian circuits in random graphs}, Discrete
  Mathematics \textbf{14} (1976), no.~4, 359--364.

\bibitem{ST04}
D.~A. Spielman and S.-H. Teng, \emph{Smoothed analysis of algorithms: Why the
  simplex algorithm usually takes polynomial time}, Journal of the ACM
  \textbf{51} (2004), no.~3, 385--463.

\end{thebibliography}
\end{document}